\newcommand{\RR}{\mathbb{R}}
\newcommand{\CC}{\mathbb{C}}
\newcommand{\NN}{\mathbb{N}}
\newtheorem{Tw}{Theorem}
\newtheorem{Le}{Lemma}
\newtheorem{Stw}{Proposition}
\newtheorem{Wn}{Corollary}
\theoremstyle{remark}
\newtheorem{Uw}{Remark}
\theoremstyle{definition}
\newtheorem{Df}{Definition}
\begin{document} 
\title[The polyharmonic Bergman space]{The polyharmonic Bergman space for the union of rotated unit balls}
\keywords{polyharmonic functions, Bergman space, Bergman kernel, zonal polyharmonics, weighted Bergman space, weighted Bergman kernel}
\subjclass[2010]{31B30, 32A25, 32A36}
\author{Hubert Grzebu{\l}a}
\address{Faculty of Mathematics and Natural Sciences,
College of Science\\
Cardinal Stefan Wyszy\'nski University\\
W\'oycickiego 1/3, 
01-938 Warszawa, Poland}
\email{h.grzebula@student.uksw.edu.pl}

\begin{abstract}
In the paper we consider the polyharmonic Bergman space for the union of the rotated unit Euclidean balls. Using so called zonal polyharmonics we derive the formulas for the kernel of this space. Moreover, we study the weighted polyharmonic Bergman space. By the same argument we get the Bergman kernel for this space.
 \end{abstract}
\maketitle   
\section{Introduction}
The polyharmonic Bergman spaces have recently been extensively studied (see \cite{P}, \cite{P2}, \cite{T} or \cite{T2}). They are mainly considered on the unit ball  or on its complement. However, we regard the space of polyharmonic and square integrable functions on the set $\widehat{B}_p=\bigcup_{k=0}^{p-1}e^\frac{k\pi i}{p}B$ (in fact, we may assume the polyharmonicity only on $B$, because every polyharmonic function can be extended analytically from $B$ onto every rotated Euclidean ball). More precisely, we consider the space of the  polyharmonic functions on $\widehat{B}_p$ such that
\begin{equation*}
||u||_{b_p^2}:=\left(\frac{1}{p}\sum_{k=0}^{p-1}\int\limits_B\left|u(e^\frac{k\pi i}{p}y) \right|_{\CC}^2dy \right)^{1/2}
<\infty.
\end{equation*} 

Such space we denote by $b_p^2(\widehat{B}_p)$. The motivation to study the polyharmonic functions on such set is given in the paper \cite{G-M} (see also \cite{L} and \cite{S-M}). 

Using the mean value property for polyharmonic functions (see Lemma 5 in \cite{P}) and some theorems, we conclude that $b_p^2(\widehat{B}_p)$ is a Hilbert space with the inner product
$$\langle u,v \rangle_{b^2_p}=\frac{1}{p }\sum_{k=0}^{p-1}\int\limits_{B}u(e^\frac{k\pi i}{p}y)\overline{v(e^\frac{k\pi i}{p}y)}\,dy.$$
Further, by theorem of Riesz, there exists a function $R_p(x,\cdot)\in b_p^2(\widehat{B}_p)$ such that $u(x)=\langle u,R_p(x,\cdot) \rangle_{b^2_p}$ for every $u\in b_p^2(\widehat{B}_p)$. The function $R_p(x,\cdot)$ is a reproducing kernel for the Bergman space and it is called the Bergman kernel for $\widehat{B}_p$. Using some properties of spherical polyharmonics and zonal polyharmonics we get the formula  for the Bergman kernel which is similar to the harmonic Bergman kernel:
$$R_p(x,y)=\frac{1}{n\Omega_n}\sum_{m=0}^{\infty}(n+2m)Z^p_m(x,y),$$
where $\Omega_n=\pi^{n/2}/\Gamma(n/2+1)$ is the volume of the unit ball $B$ in $\RR^n$.
By the formula for polyharmonic Poisson kernel (see Theorem 4 in  \cite{G-M-2}) we can express the Bergman kernel in the term of polyharmonic Poisson kernel $P_p(x,y)$
\begin{eqnarray*}
R_p(x,y) &=& \frac{1}{n\Omega_n}\left( nP_p(x,y)+\frac{d}{dt}P_p(tx,ty)\biggr|_{t=1}\right)
\end{eqnarray*}
and from this we obtain the explicit formula for Bergman kernel
$$ R_p(x,y) = \frac{(n-4p)|x|^{2p+2}|\overline{y}|^{2p+2}+(8px\overline{y}-n-4p)|x|^{2p}|y|^{2p}+n(1-|x|^{2}|\overline{y}|^{2})}{n\Omega_n(1-2x\overline{y}+|x|^2|\overline{y}|^2)^{n/2+1}}. $$
Moreover, we can express $R_p(x,y)$ in the terms of the harmonic Bergman kernel $R(x,y)$ and the harmonic Poisson kernel $P(x,y)$:
$$R_p(x,y)=\frac{1-|x|^{2p}|\overline{y}|^{2p}}{1-|x|^2|\overline{y}|^{2}} R(x,y)+\frac{1}{n\Omega_n}\sum_{k=0}^{p-1}4k|x|^{2k}|\overline{y}|^{2k}P(x,y).$$
Next we consider the weighted polyharmonic Bergman space. Here we study polyharmonic functions on $\widehat{B}_p$, which satisfy the following condition
\begin{equation*}
||u||_{b_{p,\alpha,\beta}^2}:=\left(\frac{1}{p}\sum_{k=0}^{p-1}\int\limits_B\left|u(e^\frac{k\pi i}{p}y) \right|_\CC^2|y|^\alpha(1-|y|^{2})^\beta dy \right)^{1/2}
<\infty,
\end{equation*}
where $n+\alpha>0, \beta>-1$. We denote this space by $b_{p,\alpha,\beta}^2(\widehat{B}_p)$. By the similar arguments we prove that $b_{p,\alpha,\beta}^2(\widehat{B}_p)$ is a Hilbert space and there exists the reproducing kernel $R_{p,\alpha,\beta}(x,\cdot)\in b_{p,\alpha,\beta}^2(\widehat{B}_p)$ such that
$$u(x)=\frac{1}{p}\sum_{k=0}^{p-1}\int\limits_B 
 u(e^\frac{k\pi i}{p}y) \overline{R_{p,\alpha,\beta}(x,e^\frac{k\pi i}{p}y)} |y|^\alpha(1-|y|^{2})^\beta dy$$
for every $u\in b_{p,\alpha,\beta}^2(\widehat{B}_p)$.
The function $R_{p,\alpha,\beta}$ is called a polyharmonic weighted Bergman kernel. We get the formula for this kernel
$$R_{p,\alpha,\beta}(x,y)=\frac{1}{n\Omega_n}\sum_{m=0}^{\infty}\frac{2\Gamma(m+\beta+1+\frac{n+\alpha}{2})}{ \Gamma(\beta+1)\Gamma(m+\frac{\alpha+n}{2})}  Z^p_m(x,y).$$
Moreover
$$R_{p,\alpha,\beta}(x,y)=\sum_{k=0}^{p-1}|x|^{2k}|\overline{y}|^{2k}R_{1,\alpha+4k,\beta}(x,y).$$

The paper is organised as follows. In the next section we give some basic notations and one lemma about extension of polyharmonic functions from the real ball onto its rotation (Lemma \ref{L1}).

In the third section we recall some informations about the spherical polyharmonics,  zonal polyharmonics,  polyharmonic Poisson kernel and their properties (Lemmas \ref{L2}-\ref{L6}). By these lemmas we get another properties for polyharmonic functions (Proposition \ref{S1}, Proposition \ref{S2}).

In the next section we introduce the polyharmonic Bergman space. Using Lemma \ref{L7} we get some properties for this space (Proposition \ref{S3} and Proposition \ref{S4}).

In the fifth section we introduce the polyharmonic Bergman kernel for the set $\widehat{B}_p$. We give  basic properties for this function (Proposition \ref{S5}) and some another properties for the polyharmonic functions (Propositions \ref{S6}--\ref{S8}). Using these properties and those ones given in Section 3 we get the formula for the polyharmonic Bergman kernel (Theorem \ref{T1}). Moreover we get the explicit form for this function (Theorem \ref{T2}) and we express it in the terms of harmonic Bergman kernel and harmonic Poisson kernel (Theorem \ref{T3}).

In the sixth section we consider the polyharmonic weighted Bergman space. Similarly as for unweighted one we show that
this space is a Hilbert space and there exists the reproducing kernel called the weighted Bergman kernel (Corollary \ref{W1}). By similar arguments (Lemma \ref{L9}, Propositions \ref{S9} and \ref{S10}),  we get the formulas for this kernel (Theorems \ref{T4}--\ref{T6}).

\section{Preliminaries}
In this section we give some basic notations and definitions.

We define the real norm
\begin{equation*}
|x|= ( \sum_{j=1}^nx_j^2 )^{1/2}\quad\textrm{for}\quad x=(x_1,\dots,x_n)\in \RR^n
\end{equation*}
and the complex norm
\begin{equation*}||z||= (\sum_{j=1}^n|z_j|_{\CC}^2 )^{1/2} \quad\textrm{for}\quad z=(z_1,\dots,z_n)\in \CC^n
\end{equation*} 
with $ |z_j|_{\CC}^2=z_j \overline{z}_j$. We will also use the complex extension of the real norm for complex vectors: 
\begin{gather*}
|z|=( \sum_{j=1}^nz_j^2 )^{1/2}\quad\textrm{for}\quad z=(z_1,\dots,z_n)\in\CC^n.
\end{gather*}
By 
$$xy=x_1y_1+x_2y_2+\dots +x_ny_n$$
we denote the usual inner product for the complex (real) vectors $x,y$.

By a square root in the above formula we mean the principal square root, where a branch cut is taken along the non-positive real axis.
Obviously the function $|\cdot|$ is not a norm in $\CC^n$, because it is complex valued and hence the function $|z-w|$ is not a metric on $\CC^n$.

We will  consider mainly complex vectors of the form $ z=e^{i\varphi}x $, that is vectors $x\in \RR^n $ rotated in $\CC^n$ by the angle $\varphi$.

For the set $G\subseteq\RR^n$ and the angle $\varphi\in\RR$ we will consider the rotated set defined by
$$e^{i\varphi}G:=\{e^{i\varphi}x:x\in G\}.$$
We will consider mainly the following unions of rotated sets in $\CC^n$:
$$
\widehat{B}_p:=\bigcup_{k=0}^{p-1}e^{\frac{k\pi i}{p}}B\quad\textrm{and}\quad\widehat{S}_p:=\bigcup_{k=0}^{p-1}e^{\frac{k\pi i}{p}}S\quad\textrm{for}
\quad p\in\NN,
$$
where $B$ and $S$ are respectively the unit ball and sphere in $\RR^n$ with a centre at the origin.

Let $G$  be an open set in $ \RR^n $. We denote by $\mathcal{A}(G)$ the space of analytic functions on $ G $.
Similarly we say that $f\in \mathcal{A}(e^{i\varphi}G)$ if and only if $f_{\varphi}(x):=f(e^{i\varphi}x)\in \mathcal{A}(G)$. We call
$$\mathcal{A}_{\Delta}(G):=\{f\in \mathcal{A}(G):\Delta_x f=0 \}$$ 
the space of harmonic functions on $ G $, where $\Delta_x $ denotes the Laplacian in $\RR^n$.
Analogously we define the family $\mathcal{A}_{\Delta}(e^{i\varphi}G)$ of harmonic functions on $e^{i\varphi}G$. Observe that
$f\in \mathcal{A}_{\Delta}(e^{i\varphi}G)$ if and only if $f_{\varphi}\in\mathcal{A}_{\Delta}(G)$. 
Similarly, replacing the Laplace operator $\Delta_x$ by its $p$-th iteration $\Delta_x^p$ in the above definitions,
we introduce the spaces of polyharmonic functions of degree $p$, that is $ \mathcal{A}_{\Delta^p}(G)$
and $\mathcal{A}_{\Delta^p}(e^{i\varphi}G)$.

We will use the following lemma (see also Siciak's Theorem, \cite{S})

\begin{Le}[{\cite[Lemma 1]{G-M}}]
\label{L1}
Let $\varphi\in\RR$ and $u\in \mathcal{A}_{\Delta^p}(B)$. Then 
the function $u$ has a holomorphic extension to the set $\{z\in\CC^n\colon\ z=e^{i\psi}x,\ \psi\in\RR,\ x\in B\}$,
whose restriction $u_{\varphi} $ to $e^{i\varphi}B$ is polyharmonic of order $p$, i.e. $u_{\varphi}\in \mathcal{A}_{\Delta^p}(e^{i\varphi}B)$.
\end{Le}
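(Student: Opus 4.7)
My proof would proceed in three moves: (i) decompose $u$ via the Almansi representation combined with the homogeneous harmonic expansion into a series of polynomials; (ii) extend this series term by term to $\CC^n$ and check convergence on the rotated balls; (iii) transfer polyharmonicity to $u_\varphi$ via the identity principle for holomorphic functions.

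First I would invoke the Almansi decomposition to write $u(x) = \sum_{k=0}^{p-1}|x|^{2k}h_k(x)$ on $B$, with each $h_k \in \mathcal{A}_\Delta(B)$, and then further expand each $h_k = \sum_{m=0}^{\infty} h_{k,m}$ into homogeneous harmonic polynomials (the series converging absolutely and locally uniformly on $B$). Since every $h_{k,m}$ and $|x|^{2k} = (x_1^2+\cdots+x_n^2)^k$ is a polynomial, both pieces extend tautologically to $\CC^n$, and I would define
$$\tilde u(z) := \sum_{k=0}^{p-1}(z_1^2+\cdots+z_n^2)^k\sum_{m=0}^{\infty} h_{k,m}(z)$$
as the candidate holomorphic extension. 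The key convergence check exploits the rotation structure: for $z = e^{i\psi}x$ with $x \in B$ and real $\psi$, homogeneity gives $(z_1^2+\cdots+z_n^2)^k = e^{2ik\psi}|x|^{2k}$ and $h_{k,m}(z) = e^{im\psi}h_{k,m}(x)$, so every term has modulus equal to the corresponding term of the convergent real series for $u(x)$. This yields absolute and locally uniform convergence on an open complex neighborhood (in fact on the Lie-type region $\{z\in\CC^n: \|z\| < 1\}$), whence $\tilde u$ is holomorphic there as a locally uniform limit of polynomials.

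Finally, to verify that $u_\varphi(x) := \tilde u(e^{i\varphi}x)$ is polyharmonic of order $p$ on $B$, I would apply the chain rule to the holomorphic $\tilde u$ and obtain
$$\Delta_x^p u_\varphi(x) = e^{2ip\varphi}\bigl(\widetilde{\Delta}^{\,p}\tilde u\bigr)(e^{i\varphi}x),$$
where $\widetilde{\Delta} = \sum_{j=1}^n \partial^2/\partial z_j^2$ is the holomorphic Laplacian. The function $\widetilde{\Delta}^{\,p}\tilde u$ is holomorphic on the (connected) domain of $\tilde u$ and vanishes on the real ball $B$ (since $\tilde u|_B = u$ and $\Delta^p u = 0$), so the identity principle forces $\widetilde{\Delta}^{\,p}\tilde u \equiv 0$ and hence $\Delta_x^p u_\varphi \equiv 0$ on $B$. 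The main obstacle I expect is not the algebra but the convergence step: one has to show that the formal series defining $\tilde u$ converges locally uniformly on a genuine open subset of $\CC^n$ containing each rotated ball $e^{i\psi}B$ (not merely on the real $(n{+}1)$-dimensional parameter set $\{e^{i\psi}x\}$), since only this open-set convergence legitimizes holomorphicity and the term-by-term differentiation used in the chain-rule calculation.
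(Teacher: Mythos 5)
Your overall architecture --- Almansi decomposition, term-by-term complexification of the homogeneous harmonic expansion, then the identity principle applied to the holomorphic Laplacian --- is the right one; note, though, that the paper does not prove this lemma at all but imports it from \cite{G-M}, pointing the reader to Siciak's theorem \cite{S} for the substantive step. That substantive step is exactly where your proposal has a genuine gap, and you say so yourself: your modulus computation ($|h_{k,m}(e^{i\psi}x)|=|h_{k,m}(x)|$, $|(z_1^2+\cdots+z_n^2)^k|=|x|^{2k}$) controls the series only on the set $\{e^{i\psi}x:\psi\in\RR,\ x\in B\}$, which is a real $(n{+}1)$-dimensional subset of $\CC^n$ with empty interior, so it cannot by itself certify holomorphy of $\tilde u$ or justify the term-by-term application of $\widetilde\Delta^{\,p}$. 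Identifying the obstacle is not the same as removing it, and the lemma's entire content (``$u$ has a holomorphic extension'') lives in that step.

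Moreover, your parenthetical repair --- that the series converges on $\{z\in\CC^n:\|z\|<1\}$ --- is false in general. The maximal common domain of holomorphic extension of harmonic functions on $B$ is the Lie ball $\{z: L(z)<1\}$, where $L(z)^2=\|z\|^2+\bigl(\|z\|^4-|z_1^2+\cdots+z_n^2|^2\bigr)^{1/2}$; this is strictly smaller than the Hermitian ball, and there are harmonic functions on $B$ whose extensions are singular at points with $\|z\|<1$. What saves the lemma is that each rotated ball $e^{i\psi}B$ does lie inside the Lie ball (for $z=e^{i\psi}x$ one has $L(z)=\|z\|=|x|$), so the conclusion is correct --- but to get there you must either invoke Siciak's theorem for the harmonic pieces $h_k$, or prove a genuine open-set estimate such as $|h_{k,m}(z)|\le C\,(\dim\mathcal H_m)\sup_S|h_{k,m}|\cdot L(z)^m$ together with the growth bound $\sup_S|h_{k,m}|=O(\rho^{-m})$ coming from locally uniform convergence on $B$. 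With either of these inserted, your chain-rule and identity-principle finish (a holomorphic function vanishing on the totally real $n$-dimensional slice $B$ of a connected domain vanishes identically) is correct.
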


\section{Zonal polyharmonics and polyharmonic Poisson kernel}

In this section we recall the spherical and zonal polyharmonics and the polyharmonic Poisson kernel (see \cite{G-M-2}).

Let $m,p\in \mathbb{\NN}$. We denote by $ \mathcal{H}_m^p(\CC^n)$ the space of polynomials on $\CC^n$, which are homogeneous of degree $m$
and are polyharmonic of order~$p$. By homogeneous polynomial of degree $m$ we mean the polynomial $q$ such that
$$q(az)=a^mq(z)\quad\textrm{for every}\quad a\in \CC.$$

Let's observe that if $m<2p,$ then $ \mathcal{H}_m^p(\CC^n)$ is the same as the space of homogeneous polynomials of degree $m$.
\begin{Df}[{\cite[Definition 1]{G-M-2}}]
\label{D1}
The restriction to the set $\widehat{S}_p:= \bigcup_{k=0}^{p-1}e^{\frac{k\pi i}{p}}S $ of an element of $\mathcal{H}_m^p(\CC^n)$
is called a \emph{spherical polyharmonic of degree $m$ and order $p$}.

The set of spherical polyharmonics is denoted by $ \mathcal{H}_m^p( \widehat{S}_p ) $, so
\begin{equation*}
\mathcal{H}_m^p( \widehat{S}_p ):=\left\{ u|_{\widehat{S}_p}\colon u\in \mathcal{H}_m^p(\CC^n)   \right\}.
\end{equation*}
\end{Df}

The spherical polyharmonics of order $1$ are called \emph{spherical harmonics} and their space is denoted by $\mathcal{H}_m(S):=\mathcal{H}_m^1(S)$
(see \cite[Chapter 5]{A-B-R}). Analogously we write $\mathcal{H}_{m}(\CC^n)$ instead of $\mathcal{H}_{m}^1(\CC^n)$.

We shall recall some properties of spherical polyharmonics. To do this let us consider the Hilbert space $ L^2 ( \widehat{S}_p )  $ of square-integrable functions on $\widehat{S}_p$ with the inner product defined by
\begin{equation}
\label{e1}
\left\langle f,g\right\rangle _{ \widehat{S}_p }:=\frac{1}{p} \int\limits_S \sum_{j=0}^{p-1}f(e^{\frac{j\pi i}{p}} \zeta )
\overline{g(e^{\frac{j\pi i}{p}} \zeta )}\,d\sigma (\zeta),
\end{equation}
where $d\sigma$ is a normalized surface-area measure on the unit sphere $S$.
\begin{Le}[{\cite[Propositions 4]{G-M-2}}]
\label{L2}
The space $\mathcal{H}^p_m(\widehat{S}_p)$ is finite dimensional.
\end{Le}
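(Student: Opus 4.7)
The proof should be immediate from the definition together with the finite dimensionality of the ambient polynomial space. The plan is as follows.

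First, I would note that the space $\mathcal{P}_m(\CC^n)$ of polynomials on $\CC^n$ homogeneous of degree $m$ is finite dimensional: a basis is given by the monomials $z_1^{\alpha_1}\cdots z_n^{\alpha_n}$ with $\alpha_1+\dots+\alpha_n=m$, so $\dim \mathcal{P}_m(\CC^n)=\binom{n+m-1}{m}$. By its very definition,
\[
\mathcal{H}_m^p(\CC^n)=\{q\in\mathcal{P}_m(\CC^n):\Delta^p q=0\}
\]
is a linear subspace of $\mathcal{P}_m(\CC^n)$ (being the kernel of the linear operator $\Delta^p$ acting on $\mathcal{P}_m(\CC^n)$). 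Consequently $\mathcal{H}_m^p(\CC^n)$ is itself finite dimensional, with dimension at most $\binom{n+m-1}{m}$.

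Next, consider the restriction map
\[
\Phi\colon \mathcal{H}_m^p(\CC^n)\longrightarrow \mathcal{H}_m^p(\widehat{S}_p),\qquad \Phi(q)=q\big|_{\widehat{S}_p}.
\]
By Definition \ref{D1}, $\mathcal{H}_m^p(\widehat{S}_p)$ is precisely the image of $\Phi$, so $\Phi$ is surjective. Since the image of a finite dimensional vector space under a linear map is finite dimensional, we conclude that $\dim \mathcal{H}_m^p(\widehat{S}_p)\le \dim\mathcal{H}_m^p(\CC^n)<\infty$, which is the desired statement.

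There is essentially no real obstacle here; the only point worth stressing is that everything is a subspace of the finite dimensional space $\mathcal{P}_m(\CC^n)$, so the polyharmonicity constraint and the restriction to $\widehat{S}_p$ can only decrease the dimension. One could add a brief remark that for $m<2p$ the operator $\Delta^p$ annihilates every homogeneous polynomial of degree $m$ (since each application of $\Delta$ lowers the degree by $2$), so in that range $\mathcal{H}_m^p(\CC^n)=\mathcal{P}_m(\CC^n)$, consistent with the observation made in the text just before Definition \ref{D1}.
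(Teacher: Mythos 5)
Your argument is correct and complete: $\mathcal{H}_m^p(\CC^n)$ is a subspace of the finite-dimensional space of homogeneous polynomials of degree $m$, and $\mathcal{H}_m^p(\widehat{S}_p)$ is its image under the linear restriction map, hence finite dimensional. The paper itself offers no proof, importing the lemma from \cite[Proposition 4]{G-M-2}, and your dimension-count via the surjective restriction map is the standard argument one would expect to find there.
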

\begin{Le}[{\cite[Theorem 1]{G-M-2}}]
\label{L3}
The space $L^2 ( \widehat{S}_p )$ is the \emph{direct sum} of spaces $ \mathcal{H}^p_m(\widehat{S}_p) $ and we write 
\begin{equation*}
L^2 ( \widehat{S}_p )=\bigoplus_{m=0}^{\infty} \mathcal{H}_m^p ( \widehat{S}_p ).
\end{equation*}
It means that
\begin{enumerate}
\item[(i)] $\mathcal{H}^p_m(\widehat{S}_p)$ is a closed subspace of $L^2 ( \widehat{S}_p )$ for every $m$.
\item[(ii)] $\mathcal{H}^p_m(\widehat{S}_p)$ is orthogonal to $\mathcal{H}^p_k(\widehat{S}_p)$ if $m\neq k$.
\item[(iii)] For every $ x \in L^2 ( \widehat{S}_p )$ there exist $ x_m \in \mathcal{H}^p_m(\widehat{S}_p)$  such that $ x=x_0+x_1+x_2+\dots$, where the sum is converging in the norm of $L^2 ( \widehat{S}_p )$.
\end{enumerate}
\end{Le}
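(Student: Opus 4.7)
The plan is to handle (i), (ii), (iii) in order. The main technical ingredient throughout is the Almansi decomposition for homogeneous polyharmonic polynomials: every $u \in \mathcal{H}_m^p(\CC^n)$ admits a representation $u(x) = \sum_{s=0}^{p-1} |x|^{2s} h_s(x)$ with $h_s \in \mathcal{H}_{m-2s}(\CC^n)$ harmonic and homogeneous of degree $m-2s$ (set $h_s = 0$ when $m-2s < 0$). Because $|x|^2 = e^{2j\pi i/p}$ on $e^{j\pi i/p}S$ (using the complex extension of the norm) and $h_s(e^{j\pi i/p}\zeta) = e^{j(m-2s)\pi i/p}h_s(\zeta)$ by homogeneity, this yields the fundamental pointwise identity
\begin{equation*}
u(e^{j\pi i/p}\zeta) \;=\; e^{jm\pi i/p}\, U(\zeta), \qquad \zeta \in S,
\end{equation*}
where $U := h_0+h_1+\cdots+h_{p-1}$ is a finite sum of spherical harmonics of degrees $m, m-2, \ldots, m-2(p-1)$.

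Part (i) is immediate: by Lemma \ref{L2} the subspace $\mathcal{H}_m^p(\widehat{S}_p)$ is finite-dimensional, hence closed in $L^2(\widehat{S}_p)$.

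For (ii), let $u \in \mathcal{H}_m^p(\CC^n)$ and $v \in \mathcal{H}_\ell^p(\CC^n)$ with associated $U, V$. Inserting the identity above into \eqref{e1} gives
\begin{equation*}
\langle u, v \rangle_{\widehat{S}_p} \;=\; \left( \frac{1}{p} \sum_{j=0}^{p-1} e^{j(m-\ell)\pi i/p} \right) \int_S U(\zeta)\,\overline{V(\zeta)}\, d\sigma(\zeta).
\end{equation*}
If $m-\ell$ is not a multiple of $2p$, the geometric sum in $j$ vanishes. If $m-\ell$ is a nonzero multiple of $2p$ the geometric sum equals $1$, but then $|m-\ell|\ge 2p$, so the sets of harmonic degrees occurring in $U$, namely $\{m, m-2,\ldots,m-2(p-1)\}$, and in $V$, namely $\{\ell,\ldots,\ell-2(p-1)\}$, are disjoint; the classical orthogonality of spherical harmonics of distinct degrees on $S$ then forces the remaining integral to be zero.

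For (iii), I identify $L^2(\widehat{S}_p)$ with $(L^2(S))^p$ via $f \mapsto (f_j)$, $f_j(\zeta) := f(e^{j\pi i/p}\zeta)$, and expand each $f_j$ in the classical spherical-harmonic basis of $L^2(S)$: $f_j = \sum_{k\ge 0} f_j^{(k)}$ with $f_j^{(k)} \in \mathcal{H}_k(S)$. The construction of the decomposition then reduces, for each fixed degree $k$, to solving the linear system
\begin{equation*}
f_j^{(k)} \;=\; \sum_{s=0}^{p-1} e^{j(k+2s)\pi i/p}\, h_{k+2s,\,s}, \qquad j = 0, \ldots, p-1,
\end{equation*}
for unknowns $h_{k+2s,s} \in \mathcal{H}_k(S)$. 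The coefficient matrix factors as the diagonal matrix $\operatorname{diag}(e^{jk\pi i/p})_j$ times the Vandermonde/DFT matrix $(e^{2js\pi i/p})_{j,s}$ of the $p$-th roots of unity, and is therefore invertible. Reassembling $u_m(x) := \sum_{s=0}^{p-1} |x|^{2s} h_{m,s}(x)$ via Almansi yields elements of $\mathcal{H}_m^p(\CC^n)$ whose restrictions realise the candidate decomposition $f = \sum_m u_m|_{\widehat{S}_p}$.

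The principal obstacle I anticipate is promoting the degreewise, pointwise-in-$j$ inversion carried out in (iii) to a genuine $L^2$-convergent expansion on $\widehat{S}_p$: one must combine the orthogonality supplied by (ii) with Parseval's identity on each sphere for $L^2(S) = \bigoplus_k \mathcal{H}_k(S)$ in order to control the tail of $\sum_m u_m$ uniformly and thereby exclude any residual component in the orthogonal complement of $\bigoplus_m \mathcal{H}_m^p(\widehat{S}_p)$.
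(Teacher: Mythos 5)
The paper itself offers no proof of Lemma \ref{L3}: it is imported verbatim as Theorem 1 of \cite{G-M-2}, whose argument rests on the same Almansi-type splitting recorded here as Lemma \ref{L4}, so I can only judge your argument on its own terms. Your overall strategy --- Almansi decomposition plus homogeneity to reduce everything to the single sphere $S$, then a discrete-Fourier inversion across the $p$ rotated copies --- is sound. Part (i) is immediate from Lemma \ref{L2}; in part (iii) the linear system is correctly set up and uniquely solvable, since its matrix is a unitary diagonal times the Fourier matrix of the $p$-th roots of unity, and the convergence issue you flag at the end is resolved exactly as you anticipate: the solution operator is bounded uniformly in $k$ (the diagonal factor is unitary and the Fourier matrix is fixed), so $\sum_{m,s}\|h_{m,s}\|_{L^2(S)}^2\leq C_p\sum_j\|f_j\|_{L^2(S)}^2$, and orthogonality from (ii) together with Parseval on each sphere identifies the $L^2$-limit of $\sum_m u_m$ with $f$.

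There is, however, one concrete false step in (ii). You claim the geometric sum $\frac{1}{p}\sum_{j=0}^{p-1}e^{j(m-\ell)\pi i/p}$ vanishes whenever $m-\ell$ is not a multiple of $2p$. That is only true when $m-\ell$ is even: the sum equals $\bigl((-1)^{m-\ell}-1\bigr)/\bigl(p(e^{(m-\ell)\pi i/p}-1)\bigr)$, which is nonzero for every odd $m-\ell$ (for instance $p=2$, $m-\ell=1$ gives $\tfrac12(1+i)$). The case split should therefore be by the parity of $m-\ell$: for $m-\ell$ even and not divisible by $2p$ the root-of-unity sum kills the inner product; for $m-\ell$ odd, and likewise for $|m-\ell|\ge 2p$ even, the degree sets $\{m,m-2,\dots,m-2(p-1)\}$ and $\{\ell,\ell-2,\dots,\ell-2(p-1)\}$ are disjoint --- by parity in the first case, by size in the second --- so $\int_S U\overline{V}\,d\sigma=0$ by the classical orthogonality of spherical harmonics of distinct degrees. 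With that repair, which uses only the tool you already invoke in your second case, the proof of (ii) and hence of the whole lemma goes through.
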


By Lemma \ref{L3} we may consider $\mathcal{H}_m^p (\widehat{S}_p)$ as a Hilbert space  with the inner product (\ref{e1}) induced
from $L^2( \widehat{S}_p )$.
 
Let $ \eta \in \widehat{S}_p$ be a fixed point. Let us consider the linear functional
$\Lambda_{\eta}\colon \mathcal{H}_m^p (\widehat{S}_p) \rightarrow {\CC} $ defined as 
\begin{gather*}
\Lambda_{\eta} (q)=q(\eta)\quad\textrm{for}\quad q\in \mathcal{H}_m^p (\widehat{S}_p).
\end{gather*}
Since $\mathcal{H}_m^p (\widehat{S}_p)$ is a finite dimensional inner-product space, it is a self-dual Hilbert space so there exists a unique $Z_m^p(\cdot,\eta)\in \mathcal{H}_m^p (\widehat{S}_p)$ such that 
\begin{equation}
\label{e2}
q(\eta)=\left\langle q,Z^p_m(\cdot,\eta)\right\rangle_{  \widehat{S}_p }\quad\textrm{for every}\quad q\in
\mathcal{H}^p_m(\widehat{S}_p).
\end{equation}

\begin{Df} [{\cite[Definition 2]{G-M-2}}]
\label{D2}
The function $Z^p_m(\cdot,\eta)$ satisfying (\ref{e2}) is called a \emph{zonal polyharmonic}
of degree $m$ and of order $p$ with a pole $\eta$. 
\end{Df}

Zonal polyharmonics of order $p=1$ are called \emph{zonal harmonics}. Throughout this paper we will denote them by
$Z_m(\cdot,\eta)$ instead of $Z^1_m(\cdot,\eta)$ for $\eta \in S$. 

Let's observe that we can extend the definition of zonal harmonics from $S\times S$ on $\widehat{S}_p\times \widehat{S}_p$ as follows:
$$Z_m(e^\frac{j\pi i}{p}\zeta,e^\frac{l\pi i}{p}\eta):=e^\frac{m(j-l)\pi i}{p}Z_m(\zeta,\eta)$$
for any $\zeta,\eta\in S$ and $j,l=0,1,\dots,p-1.$ Moreover, the zonal harmonics are extended on $B\times B$ (see 8.7 in \cite{A-B-R}) and hence, by Lemma \ref{L1}, they are extended on $\widehat{B}_p\times\widehat{B}_p$.

Let's give some properties of the zonal polyharmonics.    
\begin{Le}[{\cite[Theorem 2]{G-M-2}}]
\label{L4}
Let $\zeta,\eta \in \widehat{S}_p$, then
\begin{equation*}
Z^p_m(\zeta,\eta)=\sum_{k=0}^{p-1} |\zeta|^{2k}|\overline{\eta}| ^{2k}Z_{m-2k}(\zeta,\eta),
\end{equation*}
where in the case $m<2p$ we have  $Z_{m-2k}(\zeta,\eta)\equiv 0$ for $m<2k$ (see Remark 6 in \cite{G-M-2}).
\end{Le}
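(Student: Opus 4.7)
\emph{Proof sketch.} The plan is to show that the right-hand side is the reproducing kernel of $\mathcal{H}^p_m(\widehat{S}_p)$ in the sense of (\ref{e2}); by uniqueness of such a kernel, it must then coincide with $Z^p_m(\cdot,\eta)$. Denote the candidate by $K(\zeta,\eta):=\sum_{k=0}^{p-1}|\zeta|^{2k}|\overline{\eta}|^{2k}Z_{m-2k}(\zeta,\eta)$.

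The first step is to verify that $K(\cdot,\eta)\in\mathcal{H}^p_m(\widehat{S}_p)$. The factor $|\zeta|^{2k}$ is a homogeneous polynomial of degree $2k$ on $\CC^n$, and $Z_{m-2k}(\cdot,\eta)$ is harmonic and homogeneous of degree $m-2k$, so each summand is a homogeneous polynomial of degree $m$ (the constant $|\overline{\eta}|^{2k}$ is immaterial here). A sum of the form $\sum_{k=0}^{p-1}|\zeta|^{2k}h_k(\zeta)$ with each $h_k$ harmonic is polyharmonic of order $p$ by Almansi, hence $K(\cdot,\eta)\in\mathcal{H}^p_m(\CC^n)$ and restricts to an element of $\mathcal{H}^p_m(\widehat{S}_p)$.

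The core of the argument is the reproducing property. By Almansi's theorem every $q\in\mathcal{H}^p_m(\widehat{S}_p)$ admits a unique decomposition $q(\zeta)=\sum_{k'=0}^{p-1}|\zeta|^{2k'}h_{m-2k'}(\zeta)$ with $h_{m-2k'}$ homogeneous harmonic of degree $m-2k'$. Parameterizing $\zeta=e^{j\pi i/p}\omega$ and $\eta=e^{l\pi i/p}\nu$ with $\omega,\nu\in S$, the homogeneity of $h_{m-2k'}$ and the extension rule $Z_{m-2k}(e^{j\pi i/p}\omega,e^{l\pi i/p}\nu)=e^{(m-2k)(j-l)\pi i/p}Z_{m-2k}(\omega,\nu)$ combine with the phases from $|\zeta|^{2k'}$ and $|\overline{\eta}|^{2k}$ to yield
\begin{equation*}
q(e^{\frac{j\pi i}{p}}\omega)=e^{\frac{mj\pi i}{p}}\sum_{k'}h_{m-2k'}(\omega),\qquad K(e^{\frac{j\pi i}{p}}\omega,\eta)=e^{\frac{m(j-l)\pi i}{p}}\sum_k Z_{m-2k}(\omega,\nu).
\end{equation*}
Substituting into (\ref{e1}) and using that $Z_{m-2k}(\omega,\nu)$ is real on $S\times S$, the $j$-dependent factors collapse to $e^{ml\pi i/p}$, so summation over $j$ produces a factor $p$ that cancels the $1/p$ in the inner product. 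What remains is
\begin{equation*}
\langle q,K(\cdot,\eta)\rangle_{\widehat{S}_p}=e^{\frac{ml\pi i}{p}}\sum_{k,k'}\int_S h_{m-2k'}(\omega)Z_{m-2k}(\omega,\nu)\,d\sigma(\omega).
\end{equation*}
Orthogonality of spherical harmonics of distinct degrees kills the terms with $k\neq k'$, while for $k=k'$ the classical reproducing property of the zonal harmonic gives $h_{m-2k}(\nu)$. Summing yields $\langle q,K(\cdot,\eta)\rangle_{\widehat{S}_p}=e^{ml\pi i/p}\sum_k h_{m-2k}(\nu)=q(\eta)$, establishing the reproducing property.

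The main technical obstacle is the bookkeeping of phase factors: the contributions from the complex extensions of $|\zeta|^2$ and $|\overline{\eta}|^2$ on the rotated spheres, the homogeneities of $h_{m-2k'}$ and of $Z_{m-2k}$ in each variable, and the extension rule for the zonal harmonics must all conspire to produce the single clean factor $e^{m(j-l)\pi i/p}$ that allows the $j$-averaging to reduce everything to integrals on the real sphere $S$. Once the phases are tracked, the rest is standard spherical-harmonic machinery.
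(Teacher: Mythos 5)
The paper does not prove Lemma~\ref{L4} at all: it is quoted verbatim from \cite[Theorem 2]{G-M-2}, so there is no in-paper argument to compare against. Your proof is correct and is the natural one --- identify the right-hand side as an element of $\mathcal{H}^p_m(\widehat{S}_p)$ via the Almansi/Fischer decomposition, check the reproducing property for the inner product (\ref{e1}) by tracking the phases on the rotated spheres until everything reduces to orthogonality and reproduction of classical zonal harmonics on $S$, and conclude by uniqueness of the kernel in (\ref{e2}); this is essentially how the cited reference establishes the identity, there phrased as the orthogonal decomposition $\mathcal{H}^p_m(\widehat{S}_p)=\bigoplus_{k=0}^{p-1}|\zeta|^{2k}\mathcal{H}_{m-2k}$ together with additivity of reproducing kernels over orthogonal summands.
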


We may extend the zonal polyharmonics  from $\widehat{S}_p\times \widehat{S}_p $ on $\widehat{B}_p\times \widehat{B}_p$ in the same way as for the zonal harmonics, therefore by Lemma \ref{L4} we have
\begin{equation}
\label{e3}
 Z_m^p(x ,y)=\sum_{k=0}^{p-1}|x|^{2k}|\overline{y}|^{2k}Z_{m-2k}(x,y).
\end{equation}
 In particular
\begin{gather}
\label{e4}
Z^p_m(x,\eta)=\sum_{k=0}^{p-1} |x|^{2k}|\overline{\eta}| ^{2k}Z_{m-2k}(x,\eta)\quad\textrm{for}\quad x\in \widehat{B}_p,
\end{gather}
so by (\ref{e2}) and homogeneity we may write
\begin{gather}
\label{e5}
q(x)=\int\limits_S q(\zeta)Z^p_m(x,\zeta)d\sigma(\zeta)\quad\textrm{for}\quad x\in \widehat{B}_p.
\end{gather}

 The zonal polyharmonics give us the construction of the polyharmonic Poisson kernel. 

\begin{Df}[{\cite[Definition 6]{G-M-2}}]
\label{D3}
The function $P_p\colon(\widehat{B}_p\times \widehat{S}_p) \cup(\widehat{S}_p \times \widehat{B}_p)\to\CC$ is called a \emph{Poisson kernel
for $\widehat{B}_p$} provided for every polyharmonic function $u$ on $\widehat{B}_p$ which is continuous on $\widehat{B}_p\cup\widehat{S}_p$ and for each
$x\in \widehat{B}_p$ holds
\begin{gather*}
u(x)=\left\langle u, P_p(\cdot,x) \right\rangle_{\widehat{S}_p}=\frac{1}{p}\sum_{j=0}^{p-1}\int\limits_S u(e^{\frac{j\pi i}{p}}\zeta)
\overline{P_p(e^{\frac{j\pi i}{p}}\zeta ,x )}\, d\sigma(\zeta).
\end{gather*}
\end{Df}

When $p=1 $, the function $P(x,\zeta):=P_1(x,\zeta)$ is the classical Poisson kernel for the real ball (see \cite[Proposition 5.31]{A-B-R} and \cite[Theorem 3]{F-M})
\begin{equation*}
P(x,\zeta)=\sum_{m=0}^{\infty}Z_m(x,\zeta) = \frac{1-|x|^2|\overline{\zeta}|^2}{(|x|^2|\overline{\zeta}|^2-2x\overline{\zeta}+1)^{n/2}}.
\end{equation*}

\begin{Le}[{\cite[Theorem 4]{G-M-2}}]
\label{L5}
The Poisson kernel has the expansion
\begin{equation}
\label{e6}
P_p(x,\zeta)=\sum_{m=0}^{\infty}Z^p_m(x,\zeta)=\frac{1-|x|^{2p}|\overline{\zeta}|^{2p}}{(|x|^2|\overline{\zeta}|^2-2x \overline{\zeta}+1)^{n/2}}\quad\textrm{for}
\quad x\in \widehat{B}_p,\,\, \zeta \in \widehat{S}_p.
\end{equation}
The series converges absolutely and uniformly on $K\times \widehat{S}_p$, where $K$ is a compact subset of $\widehat{B}_p$.
\end{Le}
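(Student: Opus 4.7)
The plan is to derive the closed form by first decomposing each zonal polyharmonic via (\ref{e3}) into zonal harmonics, then reorganising the resulting double sum as a finite geometric factor multiplying the classical harmonic Poisson kernel, and finally separately verifying the reproducing property of Definition \ref{D3} by termwise integration.

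For the algebraic computation, I would substitute $Z_m^p(x,\zeta)=\sum_{k=0}^{p-1}|x|^{2k}|\overline{\zeta}|^{2k}Z_{m-2k}(x,\zeta)$ into the formal series $\sum_{m\geq 0}Z_m^p(x,\zeta)$ and interchange the order of summation. Since $Z_{m-2k}\equiv 0$ for $m<2k$, reindexing $j=m-2k$ yields
\begin{equation*}
\sum_{m=0}^{\infty} Z_m^p(x,\zeta)=\left(\sum_{k=0}^{p-1}|x|^{2k}|\overline{\zeta}|^{2k}\right)\sum_{j=0}^{\infty}Z_j(x,\zeta)=\frac{1-|x|^{2p}|\overline{\zeta}|^{2p}}{1-|x|^2|\overline{\zeta}|^2}\,P(x,\zeta),
\end{equation*}
using the finite geometric sum and the identification of the inner series with the classical Poisson kernel recalled immediately before the statement. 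Inserting the known closed form of $P(x,\zeta)$ cancels $1-|x|^2|\overline{\zeta}|^2$ and produces the claimed expression; the degenerate case $|x|^2|\overline{\zeta}|^2=1$ follows by continuity.

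Absolute and uniform convergence on $K\times\widehat{S}_p$ for compact $K\subset\widehat{B}_p$, which also legitimises the interchange of sums above, would come from the standard bound $|Z_m(a,\eta)|\leq Cm^{n-2}|a|^m$ for $a\in B$, $\eta\in S$ recorded in \cite[Chapter 5]{A-B-R}, transported to $\widehat{B}_p\times\widehat{S}_p$ through the rotation rule $Z_m(e^{j\pi i/p}a,e^{l\pi i/p}\eta)=e^{m(j-l)\pi i/p}Z_m(a,\eta)$ recalled in the text. Since the rotation factor has unit modulus, the estimate survives intact; choosing $\rho<1$ with $|x|$ bounded by $\rho$ on $K$ gives the geometric majorant $\sum m^{n-2}\rho^m$, which dominates each of the $p$ shifted summands in the decomposition simultaneously.

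It remains to verify Definition \ref{D3}. For $u\in\mathcal{A}_{\Delta^p}(\widehat{B}_p)$ continuous up to $\widehat{S}_p$, I would decompose the boundary trace $u|_{\widehat{S}_p}=\sum_m q_m$ with $q_m\in\mathcal{H}_m^p(\widehat{S}_p)$ by Lemma \ref{L3}, extend each $q_m$ homogeneously into $\widehat{B}_p$, and integrate against the kernel series. Uniform convergence permits interchanging the integral with the sum over $m$, orthogonality between $\mathcal{H}_m^p(\widehat{S}_p)$ and $\mathcal{H}_k^p(\widehat{S}_p)$ eliminates off-diagonal contributions, and the reproducing identity (\ref{e5}) applied to each $q_m$ reconstructs $\sum_m q_m(x)$ at an interior point $x$. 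The most delicate step is identifying this polyharmonic reconstruction with $u$ itself on $\widehat{B}_p$, which I would handle by invoking uniqueness of polyharmonic extensions from continuous $\widehat{S}_p$-boundary data together with Lemma \ref{L1}.
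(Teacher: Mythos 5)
The paper offers no proof of Lemma \ref{L5}: it is imported verbatim from \cite[Theorem 4]{G-M-2}, so there is no in-paper argument to compare yours against. That said, your derivation is correct and is exactly the computation one would expect behind the cited result: the decomposition of $Z_m^p$ into zonal harmonics (Lemma \ref{L4}), the interchange and reindexing $j=m-2k$ collapsing the double sum to the finite geometric factor $\sum_{k=0}^{p-1}\bigl(|x|^2|\overline{\zeta}|^2\bigr)^k=\frac{1-|x|^{2p}|\overline{\zeta}|^{2p}}{1-|x|^2|\overline{\zeta}|^2}$ times $P(x,\zeta)$, and the cancellation against the numerator of the classical Poisson kernel. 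Indeed the paper performs this very reindexing itself in the proof of Theorem \ref{T3}, and your convergence estimate is the $p=1$ version of Lemma \ref{L8}, transported by the unimodular rotation rule; note also that the ``degenerate case'' $|x|^2|\overline{\zeta}|^2=1$ never occurs, since this quantity has modulus $\|x\|^2<1$ on $\widehat{B}_p\times\widehat{S}_p$. The only step resting on material outside this paper is the verification of the reproducing property of Definition \ref{D3}, where the identification of the termwise-reconstructed series with $u$ requires solvability and uniqueness of the Dirichlet-type problem on $\widehat{B}_p$ with continuous data on $\widehat{S}_p$; that is precisely the main theorem of \cite{G-M}, so the appeal is legitimate, but you should cite it explicitly (Lemma \ref{L1} alone does not supply uniqueness).
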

\begin{Le}[{\cite[Corollary 1]{G-M}}]
\label{L6}
If $u$ is polyharmonic of order $p$ on $B(a,r)$ and continuous on the set $a+\bigcup_{k=0}^{p-1}e^\frac{k\pi i}{p}\overline{B(0,r)}$, then 
\begin{equation}
\label{e7}
 u(x)=\frac{1}{p}\sum_{k=0}^{p-1}\int\limits_{S}
 \frac{ r^{2p}-|x-a|^{2p}}{r^{2p-n} \left| e^\frac{-k\pi i}{p}(x-a)-r\zeta \right|^n }u \left(a+re^\frac{k\pi i}{p}\zeta \right)d\sigma(\zeta).
\end{equation}
\end{Le}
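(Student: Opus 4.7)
The plan is to reduce to the unit-ball Poisson representation (Definition \ref{D3}) via translation and dilation, and then unfold the explicit form of $P_p$ supplied by Lemma \ref{L5}. Set
$$v(y):=u(a+ry),\qquad y\in B.$$
Linearity of $\Delta^p$ and the hypothesis on the domain of continuity show that $v$ is polyharmonic of order $p$ on $B$ and continuous on $\bigcup_{k=0}^{p-1}e^{k\pi i/p}\overline{B}$. By Lemma \ref{L1}, $v$ admits a polyharmonic extension to $\widehat{B}_p$ and is continuous on its closure, so Definition \ref{D3} applies: for $x':=(x-a)/r\in B$,
$$v(x')=\frac{1}{p}\sum_{k=0}^{p-1}\int\limits_S v(e^{k\pi i/p}\zeta)\,\overline{P_p(e^{k\pi i/p}\zeta,x')}\,d\sigma(\zeta).$$

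Next I would evaluate the kernel using the closed form in Lemma \ref{L5}. For $\zeta\in S\subset\RR^n$ one has $|e^{k\pi i/p}\zeta|^{2}=e^{2k\pi i/p}$, hence $|e^{k\pi i/p}\zeta|^{2p}=1$, so the numerator collapses to $1-|x'|^{2p}$ (a real quantity, since $x'\in\RR^n$). For the denominator I would rewrite
$$|e^{k\pi i/p}\zeta|^{2}|\overline{x'}|^{2}-2(e^{k\pi i/p}\zeta)\overline{x'}+1=e^{2k\pi i/p}|x'|^{2}-2e^{k\pi i/p}\zeta\cdot x'+1=|e^{k\pi i/p}x'-\zeta|^{2}$$
(the complex extension of the real norm). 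Thus $P_p(e^{k\pi i/p}\zeta,x')=(1-|x'|^{2p})\,|e^{k\pi i/p}x'-\zeta|^{-n}$. Since the complex conjugate of $|e^{k\pi i/p}x'-\zeta|^{2}$ equals $|e^{-k\pi i/p}x'-\zeta|^{2}$ and the principal square root commutes with conjugation away from the negative real axis, conjugating gives $\overline{P_p(e^{k\pi i/p}\zeta,x')}=(1-|x'|^{2p})\,|e^{-k\pi i/p}x'-\zeta|^{-n}$. Finally, substituting $x-a=rx'$, factoring $r$ out of $e^{-k\pi i/p}x'-\zeta=r^{-1}(e^{-k\pi i/p}(x-a)-r\zeta)$, and clearing powers of $r$ yield exactly the kernel $(r^{2p}-|x-a|^{2p})/(r^{2p-n}|e^{-k\pi i/p}(x-a)-r\zeta|^{n})$ in \eqref{e7}.

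The only real subtlety, and the step I would check most carefully, is the commutation of conjugation with the principal square root defining $|e^{k\pi i/p}x'-\zeta|^{n}$. For $x'\in B$ and $\zeta\in S$ the quantity $e^{2k\pi i/p}|x'|^{2}-2e^{k\pi i/p}\zeta\cdot x'+1$ never meets the branch cut $(-\infty,0]$ -- one sees this by noting that at $|x'|=0$ it equals $1$ and that the pair $(e^{k\pi i/p}x',\zeta)$ cannot satisfy the equality conditions that would produce a non-positive real value while $|x'|<1$ -- so the required identity holds on the integration domain, and the argument is complete.
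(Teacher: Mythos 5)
Your argument is correct, but note that the paper itself offers no proof of Lemma \ref{L6}: it is imported verbatim as \cite[Corollary 1]{G-M}, so there is no in-paper derivation to compare against. Your reconstruction --- translate and dilate via $v(y)=u(a+ry)$, apply the reproducing property of Definition \ref{D3} to $v$ at $x'=(x-a)/r$, and unfold the closed form of Lemma \ref{L5} --- is the natural one, and the algebra checks out: $|e^{k\pi i/p}\zeta|^{2p}=e^{2k\pi i}=1$, the denominator's base equals $|e^{k\pi i/p}x'-\zeta|^{2}$, and the powers of $r$ clear to give exactly the kernel in \eqref{e7}. Two points deserve tightening. First, Lemma \ref{L5} states the formula for $P_p(x,\zeta)$ with $x\in\widehat{B}_p$, $\zeta\in\widehat{S}_p$, while Definition \ref{D3} calls for $P_p(e^{k\pi i/p}\zeta,x')$ with the arguments in the opposite slots; you should say explicitly that the two readings agree, which they do because swapping the slots conjugates both the numerator and the base of the denominator, so $P_p(\zeta,x)=\overline{P_p(x,\zeta)}$ and your final kernel is unaffected. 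Second, your branch-cut claim is asserted rather than proved; here is the short verification. Write $w=e^{2i\theta}t^{2}-2e^{i\theta}s+1$ with $\theta=k\pi/p$, $t=|x'|<1$, $s=\zeta\cdot x'\in[-t,t]$. Then $\operatorname{Im}w=2\sin\theta\,(t^{2}\cos\theta-s)$, so $w$ can be real only if $\sin\theta=0$ or $s=t^{2}\cos\theta$; in the first case $w=t^{2}\mp 2s+1\geq (1-t)^{2}>0$, and in the second $w=t^{2}(2\cos^{2}\theta-1)-2t^{2}\cos^{2}\theta+1=1-t^{2}>0$. Hence $w$ never meets $(-\infty,0]$, so conjugation commutes with the principal power $w^{n/2}$, which is the step your proof needs.
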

\begin{Stw}
\label{S1}
Let $(u_n)$ be a sequence of polyharmonic functions of order $p$ such that $u_n\rightrightarrows u$ on every compact subset of $\widehat{B}_p$. Then u is polyharmonic on $\widehat{B}_p$. 
\end{Stw}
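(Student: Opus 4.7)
The plan is to apply the integral representation of Lemma \ref{L6} with centre $a=0$, where the integration runs over a compact subset of $\widehat{B}_p$, pass to the uniform limit, and then verify that the resulting formula for $u$ is polyharmonic in $x$ because the kernel is. By the definition of $\mathcal{A}_{\Delta^p}(\widehat{B}_p)$, $u$ is polyharmonic on $\widehat{B}_p$ iff for each $j=0,\dots,p-1$ the rotated function $x\mapsto u(e^{j\pi i/p}x)$ lies in $\mathcal{A}_{\Delta^p}(B)$; the hypotheses of the proposition are invariant under the rotations $z\mapsto e^{j\pi i/p}z$, so, applying the argument below to each of the sequences $x\mapsto u_n(e^{j\pi i/p}x)$, it will be enough to prove that $u\in\mathcal{A}_{\Delta^p}(B)$.

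Fix $r\in(0,1)$. Lemma \ref{L6} applied to $u_n$ on $B(0,r)$ gives
\begin{equation*}
 u_n(x)=\frac{1}{p}\sum_{k=0}^{p-1}\int\limits_{S}\frac{r^{2p}-|x|^{2p}}{r^{2p-n}|e^{-k\pi i/p}x-r\zeta|^{n}}\,u_n(re^{k\pi i/p}\zeta)\,d\sigma(\zeta),\qquad x\in B(0,r).
\end{equation*}
The key observation, and the reason to take $a=0$, is that the integration is carried out over the compact set $\bigcup_{k=0}^{p-1}e^{k\pi i/p}rS\subset\widehat{B}_p$, so no analytic extension of $u_n$ outside $\widehat{B}_p$ is required; the uniform convergence hypothesis $u_n\rightrightarrows u$ applies on this set directly and allows me to pass to the limit inside the integral, yielding the analogous formula with $u$ in place of $u_n$.

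Finally, I need to check that the right-hand side is polyharmonic of order $p$ in $x$. The substitution $y=x/r$ identifies the kernel with $P_p(e^{-k\pi i/p}y,\zeta)$, where $P_p$ is the polyharmonic Poisson kernel of Lemma \ref{L5}. Writing $P_p(y,\zeta)=P(y,\zeta)\sum_{j=0}^{p-1}|y|^{2j}|\overline{\zeta}|^{2j}$ and using that $|y|^{2j}P(y,\zeta)$ is polyharmonic of order $j+1$ in $y$ (a direct computation via the identity $\Delta(|y|^{2j}f)=|y|^{2(j-1)}(4j\,y\cdot\nabla f+2j(2j+n-2)f)$ applied to the harmonic $f=P(\cdot,\zeta)$, together with the fact that $y\cdot\nabla$ preserves harmonicity), one sees that the kernel is polyharmonic of order $p$ in $x$, with $x$-derivatives up to order $2p$ bounded uniformly in $\zeta\in S$ on compact subsets of $B(0,r)$. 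Differentiation under the integral sign then gives $\Delta_x^p u\equiv 0$ on $B(0,r)$; letting $r\nearrow 1$ and combining with the reduction step finishes the proof. The main obstacle is this polyharmonicity-of-the-kernel verification, but it reduces, by the factorisation above, to the standard fact about $|y|^{2j}P(y,\zeta)$.
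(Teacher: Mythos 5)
Your proposal is correct and follows essentially the same route as the paper: apply the representation formula of Lemma \ref{L6}, pass to the limit under the integral using uniform convergence, and conclude polyharmonicity of $u$ from the limiting formula. The only difference is that you make explicit two points the paper leaves implicit — taking $a=0$ so that the integration set actually lies in $\widehat{B}_p$ where the uniform convergence hypothesis applies, and verifying via the factorisation $P_p=P\sum_j|y|^{2j}|\overline{\zeta}|^{2j}$ that the kernel is polyharmonic in $x$ so that the limit function is too — both of which are sound.
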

\begin{proof}
Let $(u_n)$ be the sequence as in Proposition \ref{S1}. Suppose that $\overline{B}(a,r)\subset B$, then $u_n$ is polyharmonic on $a+\bigcup _{k=0}^{p-1}e^\frac{i k\pi }{p}B$ and continuous on its closure. By Lemma \ref{L6} we may write
$$u_n(x)=\frac{1}{p}\sum_{k=0}^{p-1}\int\limits_S\frac{r^{2p}-|x-a|^{2p}}{r^{n-2p}|e^\frac{-i k\pi}{p}(x-a)-\zeta|^n}u_n(a+r e^\frac{ik\pi}{p}\zeta)d\sigma(\zeta).$$
Since the Poisson kernel for $\widehat{B}_p$ is polyharmonic so continuous, there exists constant $M>0$ such that 
$$\left|\frac{r^{2p}-|x-a|^{2p}}{r^{n-2p}|e^\frac{-i k\pi}{p}(x-a)-\zeta|^n}\right|_\CC\leq M$$
for every $x\in K$, where $K$ is a compact subset of $\widehat{B}_p$. Therefore
\begin{eqnarray*}
\left|u_n(x)-\frac{1}{p}\sum_{k=0}^{p-1}\int\limits_S\frac{r^{2p}-|x-a|^{2p}}{r^{n-2p}|e^\frac{-i k\pi}{p}(x-a)-\zeta|^n}u(a+r e^\frac{ik\pi}{p}\zeta)d\sigma(\zeta)\right|_{\CC}\\
\leq \frac{M}{p}\sum_{k=0}^{p-1}\int\limits_S \left|u_n(a+r e^\frac{ik\pi}{p}\zeta)-u(a+r e^\frac{ik\pi}{p}\zeta)\right|_\CC d\sigma(\zeta).
\end{eqnarray*}
Since $u_n  \rightrightarrows u$ on every compact subset of $\widehat{B}_p$, we can take the limit under the integral sign. Hence we conclude that 
$$u(x)=\frac{1}{p}\sum_{k=0}^{p-1}\int\limits_S\frac{r^{2p}-|x-a|^{2p}}{r^{n-2p}|e^\frac{-i k\pi}{p}(x-a)-\zeta|^n}u(a+r e^\frac{ik\pi}{p}\zeta)d\sigma(\zeta)$$
so $u$ is polyharmonic on $\widehat{B}_p$.
\end{proof}

\begin{Stw}
\label{S2}
Let $u$ be polyharmonic on $\widehat{B}_p$, then there exist $q_m \in \mathcal{H}_m^p(\CC^n)$ such that
$$u(x)=\sum_{m=0}^{\infty}q_m(x)$$
for $x\in \widehat{B}_p$. The series converges absolutely and uniformly on compact subsets of $\widehat{B}_p$.
\end{Stw}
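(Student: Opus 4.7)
The approach is to imitate the proof of the corresponding result for harmonic functions (cf.\ \cite[Chapter 5]{A-B-R}): represent $u$ by the polyharmonic Poisson integral on a slightly shrunken ball and then substitute the zonal series from Lemma \ref{L5}.

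Fix a compact $K\subset\widehat{B}_p$ and choose $r\in(0,1)$ so close to $1$ that $K\subset r\widehat{B}_p$. The dilate $u_r(y):=u(ry)$ is polyharmonic of order $p$ on $\widehat{B}_p$ and continuous up to $\widehat{S}_p$, so Definition \ref{D3} applies and gives, for every $x\in K$,
$$u(x)=u_r(x/r)=\frac{1}{p}\sum_{j=0}^{p-1}\int\limits_S u\!\left(re^{j\pi i/p}\zeta\right)\overline{P_p(e^{j\pi i/p}\zeta,x/r)}\,d\sigma(\zeta).$$

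By Lemma \ref{L5}, $P_p(\cdot,x/r)=\sum_{m=0}^{\infty}Z_m^p(\cdot,x/r)$ with absolute and uniform convergence on $\widehat{S}_p$ while $x/r$ ranges over the compact set $K/r\subset\widehat{B}_p$. This uniform convergence legitimises interchanging sum and integral, producing $u(x)=\sum_{m=0}^\infty q_m(x)$ on $K$, where
$$q_m(x):=\frac{1}{p}\sum_{j=0}^{p-1}\int\limits_S u\!\left(re^{j\pi i/p}\zeta\right)\overline{Z_m^p(e^{j\pi i/p}\zeta,x/r)}\,d\sigma(\zeta).$$

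It remains to verify that $q_m\in\mathcal{H}_m^p(\CC^n)$ and that the decomposition is independent of $r$. For the first assertion one expands $Z_m^p$ via Lemma \ref{L4} and formula (\ref{e3}); since each $Z_{m-2k}(\cdot,\cdot)$ is a homogeneous harmonic polynomial in each of its variables and $|\cdot|^{2k}$ is a polynomial, the $\zeta$-integration (for each fixed $j$) produces, after absorbing the fixed factor $r^{-m}$, a homogeneous polyharmonic polynomial of degree $m$ in $x$. Independence of $r$ follows from the uniqueness of the $\mathcal{H}_m^p$-components in the orthogonal decomposition of Lemma \ref{L3} applied to the traces of $u$ on each concentric sphere. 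Finally, absolute and uniform convergence on $K$ is inherited from the corresponding property of the Poisson series in Lemma \ref{L5} together with the boundedness of $u$ on the compact set $r\widehat{S}_p$.

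The main obstacle is identifying $q_m$ as an element of $\mathcal{H}_m^p(\CC^n)$: one must carefully track the complex conjugation in $\overline{Z_m^p(\cdot,x/r)}$ and verify that the resulting expression is a polynomial in the variable $x$ (rather than in $\bar x$) of exact degree $m$ and polyharmonic order $p$. This requires exploiting the explicit structure of the zonal (poly)harmonics recalled around (\ref{e3})--(\ref{e5}).
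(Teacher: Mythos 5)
Your proposal is correct and follows exactly the route the paper intends: the paper's own proof is simply the remark that the argument is ``almost the same as in the harmonic case (\cite[Corollary 5.34]{A-B-R})'', and your dilation-plus-Poisson-integral argument, with the polyharmonic Poisson kernel expanded into zonal polyharmonics via Lemma \ref{L5} and the sum--integral interchange justified by uniform convergence, is precisely that adaptation. The points you flag (the conjugation in $\overline{Z_m^p}$, homogeneity of degree $m$ from \eqref{e3}, and $r$-independence via the orthogonal decomposition of Lemma \ref{L3}) are handled correctly.
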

\begin{proof}
The proof is almost the same as in the harmonic case (see {\cite[Corollary 5.34]{A-B-R}}).
\end{proof}

At the end let us observe that by (\ref{e3}) and (\ref{e6}) we may extend the polyharmonic Poisson kernel:

\begin{multline*}
P_p(x,y):=\sum_{m=0}^{\infty}Z^p_m(x,y)=\sum_{m=0}^{\infty}Z^p_m\left( x|\overline{y}|,\frac{y}{|y|}\right)\\
=P_p\left( x|\overline{y}|,\frac{y}{|y|}\right)=\frac{1-|x|\overline{y}||^{2p}}{(1-2x|\overline{y}|\overline{\frac{y}{|\overline{y}|}}+|x|\overline{y}||^2|\overline{\frac{y}{|\overline{y}|}}|^2)^{n/2}}
\end{multline*}
thus
\begin{equation}
\label{e8}
P_p(x,y)=\sum_{m=0}^{\infty}Z^p_m(x,y)=\frac{1-|x|^{2p}|\overline{y}|^{2p}}{(1-2x\overline{y}+|x|^2|\overline{y}|^2)^{n/2}}.
\end{equation}
Let's note that by the above considerations and the Lemma \ref{L1}  we can also extend the harmonic Poisson kernel $P(x,y)$ onto $\widehat{B}_p\times \widehat{B}_p$.

\section{The Bergman space}

Let us consider the polyharmonic functions of order $p$ on  $\widehat{B}_p$ such that
\begin{equation*}
||u||_{b_p^2}:=\left(\frac{1}{p}\sum_{k=0}^{p-1}\int\limits_B\left|u(e^\frac{k\pi i}{p}y) \right|_\CC^2dy \right)^{1/2}
<\infty.
\end{equation*} 
The above condition makes sense because every polyharmonic function can be extended onto any rotated ball by Lemma \ref{L1}. The set of polyharmonic  functions that satisfy the above condition is called the polyharmonic Bergman space, abbreviated the Bergman space, and we denote it by $b_p^2(\widehat{B}_p)$, hence
$$b_p^2(\widehat{B}_p):=\mathcal{A}_{\Delta^p}(B)\cap L^2(\widehat{B}_p).$$ 

When $p=1$, we have the classical harmonic Bergman space $b^2(B)$ (see Chapter 8 in \cite{A-B-R}).
As in the harmonic case, we want to show that $b_p^2(\widehat{B}_p)$ is also a Hilbert space. To do this we will use the following mean value property for polyharmonic functions:
\begin{Le}[Mean value property, {\cite[Lemma 5]{P}}] 
\label{L7}
For every compact subset $K\subset B$ and $x\in K$,
there exists a constant $C=C(K,n,p)$ such that
$$|u(x)|_\CC^2\leq C \int\limits_B |u(y)|_\CC^2 dy$$
for every $u\in \mathcal{A}_{\Delta^p}(B)$.
\end{Le}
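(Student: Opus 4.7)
The plan is to combine Pizzetti's mean-value formula for polyharmonic functions with an elementary linear-algebraic inversion, representing $u(x)$ as an integral of $u$ against a bounded radial kernel on a ball $B(x,R) \subset B$, and then invoking Cauchy--Schwarz. First I fix a compact $K \subset B$ and set $R := \tfrac{1}{2}\operatorname{dist}(K,\partial B) > 0$, so that $\overline{B(x,R)} \subset B$ for every $x \in K$; the constant in the statement will depend on $K$ only through $R$.

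Next I recall Pizzetti's formula: for any smooth $v$ on a neighbourhood of $\overline{B(x,r)}$, the spherical mean satisfies
\[
\int_{S^{n-1}} v(x+r\zeta)\, d\sigma(\zeta) = \sum_{k=0}^{\infty} c_{n,k}\, r^{2k}\, \Delta^k v(x), \qquad c_{n,k} = \frac{\Gamma(n/2)}{4^k\, k!\,\Gamma(n/2+k)},
\]
and since $\Delta^p u = 0$, for $v=u$ this series truncates at $k = p-1$. For each $j = 0, 1, \dots, p-1$ I multiply Pizzetti's identity by $r^{n-1+2j}$ and integrate over $r \in (0,R)$; up to the constant $|S^{n-1}|$, the left-hand side becomes $\int_{B(x,R)} |y-x|^{2j}\, u(y)\, dy$, while the right-hand side becomes
\[
\sum_{k=0}^{p-1} \frac{c_{n,k}\, R^{n+2j+2k}}{n+2j+2k}\, \Delta^k u(x).
\]
Viewed as a $p \times p$ linear system in the unknowns $\Delta^k u(x)$, $k = 0, \dots, p-1$, its matrix is (up to positive diagonal scalars) the Cauchy matrix $\bigl[\,1/(n+2j+2k)\,\bigr]_{j,k=0}^{p-1}$ and hence invertible. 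Solving for the $k=0$ entry produces a representation
\[
u(x) = \sum_{j=0}^{p-1} \gamma_j \int_{B(x,R)} |y-x|^{2j}\, u(y)\, dy,
\]
with coefficients $\gamma_j = \gamma_j(R,n,p)$ independent of $x$ and of $u$.

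The estimate then follows at once by Cauchy--Schwarz: using $|y-x|^{2j} \leq R^{2j}$ on $B(x,R)$, the volume bound $|B(x,R)| = \Omega_n R^n$, and the monotonicity $B(x,R) \subset B$,
\[
|u(x)|_{\CC}^{2} \leq \Bigl(\sum_{j=0}^{p-1} |\gamma_j|\, R^{2j}\Bigr)^{2} \Omega_n R^n \int_{B} |u(y)|_{\CC}^{2}\, dy = C(K,n,p) \int_{B} |u|_{\CC}^{2}\, dy,
\]
as required. The complex-valuedness of $u$ is no obstruction because the whole argument is linear and may be applied to $\operatorname{Re} u$ and $\operatorname{Im} u$ separately; more pragmatically, Pizzetti's formula carries over verbatim to $\CC$-valued polyharmonic functions. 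The only point that requires verification is the invertibility of the Cauchy-type coefficient matrix, but this follows immediately from the classical formula $\det[1/(a_j+b_k)] = \prod_{j<k}(a_j-a_k)(b_j-b_k)\big/\prod_{j,k}(a_j+b_k)$ applied with the arithmetic progressions $a_j = n+2j$ and $b_k = 2k$. Everything else is routine estimation.
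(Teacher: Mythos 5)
Your argument is correct and essentially self-contained, and it is worth pointing out that the paper itself offers no proof of this lemma: it is imported verbatim from \cite{P} (Lemma 5 there), so there is no internal argument to compare against. Your route --- the exact, truncated Pizzetti expansion $\int_{S}u(x+r\zeta)\,d\sigma(\zeta)=\sum_{k=0}^{p-1}c_{n,k}r^{2k}\Delta^{k}u(x)$, integrated against the weights $r^{n-1+2j}$ for $j=0,\dots,p-1$ to produce a $p\times p$ linear system in the unknowns $\Delta^{k}u(x)$, inversion of that system via the Cauchy determinant with nodes $a_j=n+2j$, $b_k=2k$ to get $u(x)=\sum_{j}\gamma_j\int_{B(x,R)}|y-x|^{2j}u(y)\,dy$, and finally Cauchy--Schwarz --- is a clean, complete derivation, and the uniformity in $x\in K$ is sound because $R$ and hence the $\gamma_j$ are fixed once $K$ is. Two small points deserve explicit mention. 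First, the \emph{exactness} of the truncated Pizzetti formula for polyharmonic functions (as opposed to its validity as a formal or asymptotic expansion for general smooth $v$) should be justified in one line, e.g.\ via the Almansi decomposition $u(y)=\sum_{k=0}^{p-1}|y-x|^{2k}h_k(y)$ with $h_k$ harmonic on $B(x,\rho)$, $\rho=\operatorname{dist}(x,\partial B)$, combined with the harmonic mean value property; alternatively cite \cite{L}, which is already in the bibliography. Second, linearity indeed disposes of the complex-valuedness issue, as you note. What your approach buys over the paper's citation is a transparent, elementary proof with an explicit constant $C=\bigl(\sum_{j}|\gamma_j|R^{2j}\bigr)^{2}\Omega_nR^{n}$ depending only on $R=\tfrac12\operatorname{dist}(K,\partial B)$, $n$ and $p$.
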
    

\begin{Stw}
\label{S3}
For every compact subset $K\subset \widehat{B}_p$ and $x\in K$,
there exists a constant $C=C(K,n,p)$ such that
$$|u(x)|_{\CC}\leq C ||u||_{b_p^2}$$ 
for every $u\in b_p^2(\widehat{B}_p)$.
\end{Stw}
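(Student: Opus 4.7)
The plan is to reduce the statement to the mean value estimate on a single ball (Lemma \ref{L7}) via the extension result of Lemma \ref{L1}. Given $x\in K$ I pick $k\in\{0,\dots,p-1\}$ with $y:=e^{-k\pi i/p}x\in B$; by Lemma \ref{L1} the function $u_k(z):=u(e^{k\pi i/p}z)$ lies in $\mathcal{A}_{\Delta^p}(B)$, so, once an appropriate compact subset of $B$ is in place, Lemma \ref{L7} supplies a constant $C'$ with
$$|u(x)|_{\CC}^2 = |u_k(y)|_{\CC}^2 \leq C' \int_{B}|u_k(z)|_{\CC}^2\,dz \leq C'\,p\,\|u\|_{b_p^2}^2,$$
and setting $C=\sqrt{C'p}$ finishes the proof.

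The whole task therefore reduces to producing a single compact $\tilde K\subset B$ into which every preimage $e^{-k\pi i/p}x$, $x\in K$, falls, uniformly in $k$. The natural candidate is $\tilde K=\overline{B(0,r)}$ for some $r\in(0,1)$, and I would need to show
$$K\subset\bigcup_{k=0}^{p-1}e^{k\pi i/p}\overline{B(0,r)}.$$
With this in hand, Lemma \ref{L7} delivers one constant $C'=C'(\tilde K,n,p)$ good for every $k$ simultaneously, and combined with the definition of $\|\cdot\|_{b_p^2}$ the estimate above is immediate.

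The delicate step is establishing this uniform $r$, because the slices $e^{k\pi i/p}B$ are $n$-dimensional real subsets of $\CC^n$ and are not open there. I plan to argue by contradiction: if no such $r<1$ worked, then points $x_n\in K$ whose minimal ``radii'' $|y_n|$ tend to $1$ would, by compactness of $K$, accumulate at some $x_\star\in K$, which must factor as $x_\star=e^{k_\star\pi i/p}y_\star$ with $y_\star\in B$. If $x_\star\neq 0$, I would use the elementary fact that the real $n$-planes $e^{j\pi i/p}\RR^n$ meet pairwise only at the origin (since $\mathrm{Im}(e^{(j-k_\star)\pi i/p})\neq 0$ for $j\neq k_\star$) to find a small $\CC^n$-neighborhood of $x_\star$ whose intersection with $\widehat{B}_p$ lies entirely in $e^{k_\star\pi i/p}\RR^n$; this pins the radii of nearby $x_n$ close to $|y_\star|<1$ and yields a contradiction. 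If $x_\star=0$, the identity $\|e^{k\pi i/p}y\|=|y|$ valid for real $y$ forces the radii of $x_n$ to tend to $0$ instead, again a contradiction. Once this topological point is settled, the rest is the routine estimate displayed above, and this is the only step that truly departs from the harmonic analogue.
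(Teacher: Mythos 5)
Your argument is correct and follows the same route as the paper: Lemma \ref{L1} to transport $u$ to a polyharmonic function on $B$, Lemma \ref{L7} on a compact subset of $B$, and then enlarging the single integral to the full sum defining $\|u\|_{b_p^2}^2$; the constants match the paper's $|u(x)|_{\CC}^2\leq Cp\|u\|_{b_p^2}^2$. The one place you go beyond the paper is the reduction of an arbitrary compact $K\subset\widehat{B}_p$ to a single compact $\overline{B(0,r)}\subset B$ valid for all rotations at once; the paper passes over this silently (it fixes $K\subset B$ and then states the conclusion for $K\subset\widehat{B}_p$), so making it explicit is a genuine improvement. However, your contradiction argument for this ``delicate step'' is much heavier than needed: since each coordinate of $e^{k\pi i/p}y$ is a unimodular multiple of the corresponding coordinate of $y$, one has $\|e^{k\pi i/p}y\|=|y|$ for every real $y$ (a fact you already invoke in the case $x_\star=0$), so the ``minimal radius'' of a point $x\in\widehat{B}_p$ is simply the continuous function $x\mapsto\|x\|$, and $r:=\max_{x\in K}\|x\|<1$ exists by compactness and gives the desired inclusion $K\subset\bigcup_{k}e^{k\pi i/p}\overline{B(0,r)}$ in one line; the analysis of how the planes $e^{j\pi i/p}\RR^n$ intersect is not needed.
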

\begin{proof}
Let $K\subset B$ be compact, $x\in K$ and $j=0,1,\dots,p-1$. By Lemma \ref{L1} and Lemma \ref{L7}, there exists a positive constant $C=C(K,n,p)$ such that
$$|u(e^{\frac{j\pi i}{p}}x)|_\CC^2\leq C\int\limits_B |u(e^{\frac{j\pi i}{p}}y)|_\CC^2dy.$$
In particular
$$|u(e^{\frac{j\pi i}{p}}x)|_\CC^2\leq C\int\limits_B |u(e^{\frac{j\pi i}{p}}y)|_\CC^2dy+C\sum_{\substack{k=0\\k\neq j}}^{p-1}\int\limits_B |u(e^{\frac{k\pi i}{p}}y)|_\CC^2dy.$$
Hence for every compact $K\subset \widehat{B}_p$ and $x\in K$ we have
$$|u(x)|_\CC^2\leq C \sum_{k=0}^{p-1}\int\limits_B |u(e^{\frac{k\pi i}{p}}y)|_\CC^2dy=Cp||u||^2_{b^2_p}.$$
\end{proof}

\begin{Stw}
\label{S4}
The Bergman space $b_p^2(\widehat{B}_p)$ is a closed subspace of the Hilbert space  $L^2(\widehat{B}_p)$ with the inner product:
\begin{equation}
\label{e9}
\langle u,v \rangle_{b^2_p}=\frac{1}{p }\sum_{k=0}^{p-1}\int\limits_{B}u(e^\frac{k\pi i}{p}y)\overline{v(e^\frac{k\pi i}{p}y)}\,dy.
\end{equation}
\end{Stw}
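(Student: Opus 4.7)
The plan is to establish the three ingredients separately: that $L^2(\widehat{B}_p)$ endowed with (\ref{e9}) is a Hilbert space, that $b_p^2(\widehat{B}_p)$ is a subspace of it, and finally that this subspace is closed.

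First I would check that the sesquilinear form (\ref{e9}) is indeed an inner product on $L^2(\widehat{B}_p)$ whose induced norm is $\|\cdot\|_{b_p^2}$. Conjugate symmetry, linearity and non-negativity are immediate from the formula, and the form is nothing but the average of the standard $L^2(B)$ inner products of the $p$ rotated components $u(e^{k\pi i/p}\cdot)$. The resulting norm is equivalent (in fact, up to the factor $1/p$, equal) to the usual $L^2$ norm on $\widehat{B}_p$ expressed through this averaging, so completeness follows from the completeness of $L^2(B)$ applied coordinatewise. That $b_p^2(\widehat{B}_p)$ sits inside $L^2(\widehat{B}_p)$ is built into the definition of the space.

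The core step is closedness. Let $(u_n)\subset b_p^2(\widehat{B}_p)$ be Cauchy in $\|\cdot\|_{b_p^2}$; then it converges in norm to some $u\in L^2(\widehat{B}_p)$, and I must show $u$ can be identified a.e.\ with a polyharmonic function on $\widehat{B}_p$. Here Proposition \ref{S3} does the work: for any compact $K\subset\widehat{B}_p$ there is $C=C(K,n,p)$ with
\[
\sup_{x\in K}|u_n(x)-u_m(x)|_\CC \;\leq\; C\,\|u_n-u_m\|_{b_p^2}.
\]
Thus $(u_n)$ is uniformly Cauchy on every compact subset of $\widehat{B}_p$, hence converges locally uniformly on $\widehat{B}_p$ to a function $\tilde u$. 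By Proposition \ref{S1}, $\tilde u$ is polyharmonic of order $p$ on $\widehat{B}_p$.

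Finally, I would reconcile the two limits: since $u_n\to u$ in $L^2$ there is a subsequence converging to $u$ a.e., while $u_n\to\tilde u$ everywhere on $\widehat{B}_p$, so $\tilde u=u$ a.e. Identifying $u$ with its polyharmonic representative $\tilde u$ yields $u\in b_p^2(\widehat{B}_p)$, proving the subspace is closed. The main obstacle is just the usual subtlety of passing from norm convergence to pointwise convergence of analytic-type objects, and this is precisely what Propositions \ref{S1} and \ref{S3} are tailored to handle; everything else is bookkeeping.
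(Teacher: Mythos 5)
Your proof is correct and follows exactly the route the paper intends: the paper's proof is just a pointer to Proposition \ref{S3} and the harmonic case in Axler--Bourdon--Ramey, and your argument (norm-Cauchy implies locally uniformly Cauchy via Proposition \ref{S3}, the locally uniform limit is polyharmonic via Proposition \ref{S1}, and the two limits agree a.e.) is precisely the fleshed-out version of that reference. No gaps.
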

\begin{proof}
The proof follows from Proposition \ref{S3}, it is analogous as in the  harmonic case (see {\cite[Corollary  8.3]{A-B-R}}).
\end{proof}

\section{The Bergman kernel}

Let $x\in\widehat{B}_p$ be a fixed point. Let's consider the linear functional $\Lambda_x:b_p^2(\widehat{B}_p)\rightarrow \CC$ such that $\Lambda_x(u)=u(x)$. By Proposition \ref{S3}, $\Lambda_x$ is bounded. Since $b_p^2(\widehat{B}_p)$ is a Hilbert space with the inner product (\ref{e9}), by Riesz theorem, there exists a unique function $R_p(x,\cdot)\in b_p^2(\widehat{B}_p)$ such that
\begin{equation}
\label{e10}
u(x)=\langle u,R_p(x,\cdot) \rangle_{b^2_p}=\frac{1}{p }\sum_{k=0}^{p-1}\int\limits_{B}u(e^\frac{k\pi i}{p}y)\overline{R_p(x,e^\frac{k\pi i}{p}y)}\,dy
\end{equation}
for every  $u\in b_p^2(\widehat{B}_p)$. 
The function  $R_p(x,\cdot)$ is called \emph{the polyharmonic Bergman kernel }  for $\widehat{B}_p$.

The function $R(x,\cdot):=R_1(x,\cdot)$ is the harmonic Bergman kernel and this function is given by (see Theorem 8.9 in \cite{A-B-R}):
$$R(x,y)=\frac{1}{n\Omega_n}\sum_{m=0}^{\infty}(n+2m)Z_m(x,y) \quad\textrm{for}\quad x,y\in B.$$ 
Using Lemma \ref{L1}, we extend $R(x,y)$ on $\widehat{B}_p\times \widehat{B}_p$ (we can also use the fact that the functions $Z_m(x,y)$ are extended on $\widehat{B}_p\times \widehat{B}_p$, see section 3).
\begin{Stw}
\label{S5}
The Bergman kernel has the following properties:\\
(1) $R_p(x,y)=\overline{R_p(y,x)}$,\\
(2) $||R_p(x,\cdot)||_{b_p^2}^2=R_p(x,x)$ for $x\in \widehat{B}_p$,\\
(3) the map $Q:L^2(\widehat{B}_p)\rightarrow b_p^2(\widehat{B}_p)$ such that 
$$Q[u](x)=\frac{1}{p}\sum_{k=0}^{p-1}\int\limits_{B}u(e^\frac{k\pi i}{p}y)\overline{R_p(x,e^\frac{k\pi i}{p}y)}\,dy \ \ \ \ for \ \ u\in L^2(\widehat{B}_p), \ \ x\in \widehat{B}_p$$
is a unique orthogonal projection of $L^2(\widehat{B}_p)$ onto $b_p^2(\widehat{B}_p)$.
\end{Stw}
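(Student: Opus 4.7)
The plan is to derive all three properties directly from the defining reproducing formula (\ref{e10}) together with the closedness of $b_p^2(\widehat{B}_p)$ in $L^2(\widehat{B}_p)$ established in Proposition \ref{S4}. The arguments are formal manipulations of the reproducing property, following the standard pattern familiar from the harmonic case (cf.\ Proposition 8.4 in \cite{A-B-R}).

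For (1), I would apply (\ref{e10}) twice. First, substituting $u = R_p(y,\cdot) \in b_p^2(\widehat{B}_p)$ evaluated at the point $x$ yields $R_p(y,x) = \langle R_p(y,\cdot), R_p(x,\cdot)\rangle_{b_p^2}$. Taking complex conjugates and using the Hermitian symmetry of the inner product gives $\overline{R_p(y,x)} = \langle R_p(x,\cdot), R_p(y,\cdot)\rangle_{b_p^2}$, which by a second application of (\ref{e10}) with $u = R_p(x,\cdot) \in b_p^2(\widehat{B}_p)$ evaluated at $y$ equals $R_p(x,y)$. For (2), I would simply substitute $u = R_p(x,\cdot)$ in (\ref{e10}) evaluated at the same point $x$; this gives $R_p(x,x) = \langle R_p(x,\cdot), R_p(x,\cdot)\rangle_{b_p^2} = \|R_p(x,\cdot)\|_{b_p^2}^2$.

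For (3), the key observation I would make is that the right-hand side of the defining formula for $Q[u](x)$ is precisely $\langle u, R_p(x,\cdot)\rangle_{b_p^2}$ for every $u \in L^2(\widehat{B}_p)$, not only for $u \in b_p^2(\widehat{B}_p)$, since $R_p(x,\cdot) \in b_p^2(\widehat{B}_p) \subset L^2(\widehat{B}_p)$ and the expression depends linearly on $u$ through the $L^2$ inner product. By Proposition \ref{S4}, $b_p^2(\widehat{B}_p)$ is closed in $L^2(\widehat{B}_p)$, so we have the orthogonal decomposition $L^2(\widehat{B}_p) = b_p^2(\widehat{B}_p) \oplus b_p^2(\widehat{B}_p)^\perp$. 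I would then verify $Q$ on each summand: for $u \in b_p^2(\widehat{B}_p)$ the reproducing identity (\ref{e10}) gives $Q[u] = u$; for $u \in b_p^2(\widehat{B}_p)^\perp$ we obtain $Q[u](x) = \langle u, R_p(x,\cdot)\rangle_{b_p^2} = 0$ because $R_p(x,\cdot) \in b_p^2(\widehat{B}_p)$. By linearity, $Q$ coincides with the orthogonal projection onto $b_p^2(\widehat{B}_p)$, which in particular confirms $Q[u] \in b_p^2(\widehat{B}_p)$ for every $u \in L^2(\widehat{B}_p)$; uniqueness is then the standard uniqueness of the orthogonal projection onto a closed subspace of a Hilbert space.

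There is no real obstacle in this proof — everything is a formal consequence of (\ref{e10}), the sesquilinearity of $\langle\cdot,\cdot\rangle_{b_p^2}$, and Proposition \ref{S4}. The only point that deserves a line of explanation is the observation above that $Q[u](x) = \langle u, R_p(x,\cdot)\rangle_{b_p^2}$ extends to arbitrary $u \in L^2(\widehat{B}_p)$, since the reproducing identity (\ref{e10}) was only stated for $u \in b_p^2(\widehat{B}_p)$; this is what makes the case-split on the orthogonal decomposition legitimate.
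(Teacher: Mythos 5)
Your proof is correct and follows essentially the same route as the paper, which simply defers to the harmonic case (Proposition 8.4 in \cite{A-B-R}); your argument is exactly that standard reproducing-kernel computation, carried out from (\ref{e10}) and the closedness established in Proposition \ref{S4}. The one point you rightly single out --- that $Q[u](x)=\langle u,R_p(x,\cdot)\rangle_{b_p^2}$ makes sense for all $u\in L^2(\widehat{B}_p)$, which legitimizes the case split over $b_p^2(\widehat{B}_p)\oplus b_p^2(\widehat{B}_p)^{\perp}$ --- is precisely the step the classical proof relies on.
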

\begin{proof}
The proof is almost the same as in the harmonic case (see {\cite[Proposition 8.4]{A-B-R}}).
\end{proof}
\begin{Stw}
\label{S6}
Let $m\neq l$ and $u\in\mathcal{H}^m_p(\CC^n)$, $v\in\mathcal{H}^l_p(\CC^n)$, then $\langle u,v \rangle_{b^2_p}$=0.
\end{Stw}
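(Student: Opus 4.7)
The plan is to pass to polar coordinates on $B$ and exploit the complex homogeneity of $u$ and $v$ to factor the Bergman inner product into a radial integral times the $L^2(\widehat{S}_p)$ inner product of their restrictions to $\widehat{S}_p$. The latter vanishes by the already established orthogonality in Lemma \ref{L3}.

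First, writing $y=r\zeta$ with $r\in(0,1]$ and $\zeta\in S$, the standard polar decomposition $dy=n\Omega_n r^{n-1}\,dr\,d\sigma(\zeta)$ (with $d\sigma$ the normalized surface measure used throughout the paper) applied to the definition \eqref{e9} of $\langle\cdot,\cdot\rangle_{b^2_p}$ gives
$$\langle u,v\rangle_{b^2_p}=\frac{n\Omega_n}{p}\sum_{k=0}^{p-1}\int_0^1 r^{n-1}\int_S u(e^{k\pi i/p}r\zeta)\,\overline{v(e^{k\pi i/p}r\zeta)}\,d\sigma(\zeta)\,dr.$$

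Second, since $u\in\mathcal{H}^p_m(\CC^n)$ is homogeneous of degree $m$ (in the sense $q(az)=a^m q(z)$ for every $a\in\CC$, $z\in\CC^n$, as stated in Section 3) and $r>0$ is real, we may apply this identity with $a=r$ and $z=e^{k\pi i/p}\zeta\in\CC^n$ to get $u(e^{k\pi i/p}r\zeta)=r^m u(e^{k\pi i/p}\zeta)$, and likewise $\overline{v(e^{k\pi i/p}r\zeta)}=r^l\,\overline{v(e^{k\pi i/p}\zeta)}$. The factor $r^{m+l}$ comes out of the inner sum, so the radial and spherical integrations separate:
$$\langle u,v\rangle_{b^2_p}=n\Omega_n\left(\int_0^1 r^{n+m+l-1}\,dr\right)\cdot\frac{1}{p}\sum_{k=0}^{p-1}\int_S u(e^{k\pi i/p}\zeta)\,\overline{v(e^{k\pi i/p}\zeta)}\,d\sigma(\zeta)=\frac{n\Omega_n}{n+m+l}\,\bigl\langle u|_{\widehat{S}_p},v|_{\widehat{S}_p}\bigr\rangle_{\widehat{S}_p}.$$

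Third, the restrictions $u|_{\widehat{S}_p}$ and $v|_{\widehat{S}_p}$ lie respectively in $\mathcal{H}^p_m(\widehat{S}_p)$ and $\mathcal{H}^p_l(\widehat{S}_p)$ by Definition \ref{D1}, and these subspaces are orthogonal in $L^2(\widehat{S}_p)$ whenever $m\neq l$ by Lemma \ref{L3}(ii). Hence the spherical factor vanishes and we conclude $\langle u,v\rangle_{b^2_p}=0$.

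I do not anticipate a real obstacle; the entire argument is a one-line reduction to the spherical orthogonality once the polar decomposition and homogeneity are written down. The only thing worth verifying carefully is the scalar-homogeneity step, since the point $e^{k\pi i/p}\zeta$ is complex rather than real, but this is handled by the paper's own convention that $q(az)=a^m q(z)$ holds for all $a\in\CC$.
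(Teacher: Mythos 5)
Your proposal is correct and follows essentially the same route as the paper's own proof: pass to polar coordinates, use the homogeneity $q(az)=a^mq(z)$ to extract the factor $r^{m+l}$, and invoke the orthogonality of $\mathcal{H}^p_m(\widehat{S}_p)$ and $\mathcal{H}^p_l(\widehat{S}_p)$ from Lemma \ref{L3}. The only cosmetic difference is that you evaluate the radial integral explicitly and identify the spherical factor as $\langle u|_{\widehat{S}_p},v|_{\widehat{S}_p}\rangle_{\widehat{S}_p}$, which the paper leaves implicit.
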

\begin{proof}
\begin{eqnarray*}
\langle u,v \rangle_{b^2_p}&=&\frac{1}{p }\sum_{k=0}^{p-1}\int\limits_{B}u(e^\frac{k\pi i}{p}y)\overline{v(e^\frac{k\pi i}{p}y)}\,dy\\
&=&\frac{n\Omega_n}{p }\sum_{k=0}^{p-1}\int\limits_0^1 r^{n-1}\int\limits_{S}u(e^\frac{k\pi i}{p}r\zeta)\overline{v(e^\frac{k\pi i}{p}r\zeta)}\,d\sigma(\zeta)dr\\
&=&\frac{n\Omega_n}{p }\sum_{k=0}^{p-1}\int\limits_0^1 r^{n-1+m+l}\int\limits_{S}u(e^\frac{k\pi i}{p}\zeta)\overline{v(e^\frac{k\pi i}{p}\zeta)}\,d\sigma(\zeta)dr=0,
\end{eqnarray*}
where in the last equality we use Lemma \ref{L3}.
\end{proof}
\begin{Stw}
\label{S7}
Let $u$ be a polynomial of degree $M$, then
$$u(x)=\frac{1}{pn\Omega_n}\sum_{m=0}^{M}(n+2m)\sum_{k=0}^{p-1}\int\limits_{B}u(e^{\frac{k\pi i}{p}}y)Z^p_m(x,e^{\frac{k\pi i}{p}}y)dy \quad\textrm{for}\quad x\in\widehat{B}_p.$$
\end{Stw}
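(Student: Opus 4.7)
The approach is to decompose $u$ into polyharmonic homogeneous pieces and reduce the identity to the reproducing formula (\ref{e5}) combined with the orthogonality of Lemma \ref{L3}.

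First, since $u$ is a polynomial of degree $M$ (polyharmonic of order $p$ in this setting, as implicit in the Bergman space $b_p^2(\widehat{B}_p)$), I would write $u=\sum_{l=0}^{M}u_{l}$ for its decomposition into homogeneous components. Homogeneity commutes with $\Delta^{p}$, so each $u_{l}$ is polyharmonic of order $p$ and thus lies in $\mathcal{H}_{l}^{p}(\CC^{n})$. By linearity of the right-hand side in $u$, the identity has to be verified only on each $u_{l}$ separately.

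Next, I would pass to polar coordinates $y=r\zeta$ in the volume integral. Using that $u_{l}$ is homogeneous of degree $l$ and that $Z_{m}^{p}(x,\cdot)$ is homogeneous of degree $m$ under real radial scaling (a consequence of (\ref{e3})), the radial part collapses to $\int_{0}^{1}r^{n-1+l+m}dr=1/(n+l+m)$, so
$$\sum_{k=0}^{p-1}\int_{B}u_{l}(e^{\frac{k\pi i}{p}}y)\,Z_{m}^{p}(x,e^{\frac{k\pi i}{p}}y)\,dy=\frac{n\Omega_{n}}{n+l+m}\sum_{k=0}^{p-1}\int_{S}u_{l}(e^{\frac{k\pi i}{p}}\zeta)\,Z_{m}^{p}(x,e^{\frac{k\pi i}{p}}\zeta)\,d\sigma(\zeta).$$
In particular the weighting coefficient $(n+2m)/(n+l+m)$ from the statement equals $1$ exactly at $m=l$.

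The core of the argument is then the claim
$$\frac{1}{p}\sum_{k=0}^{p-1}\int_{S}u_{l}(e^{\frac{k\pi i}{p}}\zeta)\,Z_{m}^{p}(x,e^{\frac{k\pi i}{p}}\zeta)\,d\sigma(\zeta)=u_{l}(x)\,\delta_{lm},$$
which is the pairing of $u_{l}\in\mathcal{H}_{l}^{p}$ with the reproducing element $Z_{m}^{p}(x,\cdot)\in\mathcal{H}_{m}^{p}$ averaged across the rotated copies of $S$ in $\widehat{S}_{p}$. For $m=l$ this follows from (\ref{e5}) extended from a single $S$ to the averaged measure on $\widehat{S}_{p}$; for $m\neq l$ it reflects the orthogonality $\mathcal{H}_{l}^{p}\perp\mathcal{H}_{m}^{p}$ of Lemma \ref{L3}. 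Granting the claim, only the $m=l$ summand in the statement survives, with coefficient $1$, producing $u_{l}(x)$; summing over $l$ recovers $u(x)$.

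The main obstacle is verifying the $m\neq l$ case, where $Z_{m}^{p}(x,e^{k\pi i/p}\zeta)$ does not factor as a single phase times $Z_{m}^{p}(x,\zeta)$. The expansion (\ref{e3}) gives
$$Z_{m}^{p}(x,e^{\frac{k\pi i}{p}}\zeta)=e^{mk\pi i/p}\sum_{j=0}^{p-1}|x|^{2j}e^{-4jk\pi i/p}\,Z_{m-2j}(x,\zeta),$$
so the sum over $k$ retains only those indices $j$ with $l+m-4j\equiv 0\pmod{2p}$; for each surviving $j$ one then evaluates $\int_{S}u_{l}(\zeta)Z_{m-2j}(x,\zeta)d\sigma(\zeta)$ using the Almansi decomposition of $u_{l}$ into harmonic components and the standard harmonic reproducing/orthogonality on $S$. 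Reassembling these contributions via (\ref{e3}) yields $u_{l}(x)\delta_{lm}$, completing the argument.
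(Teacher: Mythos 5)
Your proof follows essentially the same route as the paper's: decompose $u$ into homogeneous components $u_l\in\mathcal{H}_l^p(\CC^n)$, pass to polar coordinates to extract the radial factor, reproduce the diagonal term $m=l$ via (\ref{e5}), and kill the off-diagonal terms by the orthogonality of $\mathcal{H}_l^p(\widehat{S}_p)$ and $\mathcal{H}_m^p(\widehat{S}_p)$ in $L^2(\widehat{S}_p)$ (the paper packages this orthogonality as Proposition \ref{S6} and applies it to the ball integrals, but it is the same appeal to Lemma \ref{L3}). Up to and including your displayed ``claim,'' the argument is correct and complete. The final paragraph, however, rests on a false premise and contains a computational slip: $Z_m^p(x,e^{k\pi i/p}\zeta)$ \emph{does} factor as a single phase times $Z_m^p(x,\zeta)$. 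By the paper's extension convention, $Z_{m-2j}(x,e^{k\pi i/p}\zeta)=e^{-(m-2j)k\pi i/p}Z_{m-2j}(x,\zeta)$ (note the minus sign: the pole variable transforms anti-homogeneously) and $|\overline{e^{k\pi i/p}\zeta}|^{2j}=e^{-2jk\pi i/p}$, so every term of (\ref{e3}) acquires the same phase $e^{-mk\pi i/p}$, independent of $j$; your displayed expansion with phases $e^{(m-4j)k\pi i/p}$ is therefore incorrect, as is the ensuing selection rule $l+m-4j\equiv 0\pmod{2p}$. This is exactly the homogeneity the paper uses to replace the single-ball integral by the average over the $p$ rotated balls. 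Fortunately the erroneous paragraph is not needed: the $m\neq l$ case is already fully justified by Lemma \ref{L3} as you stated, so you should simply delete the last paragraph rather than repair it.
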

\begin{proof}
Suppose first that $u\in\mathcal{H}^p_m(\CC^n)$. Then 
$$u(x)=\int\limits_{S}u(\zeta)Z^p_m(x,\zeta)d\sigma(\zeta)$$
for every $x\in\widehat{B}_p$.
We have
\begin{eqnarray*}
\int\limits_{B}u(y)Z^p_m(x,y)dy&=&n\Omega_n\int\limits_{0}^1r^{n-1}\int\limits_S u(r\zeta)Z^p_m(x,r\zeta)d\sigma(\zeta) dr\\
&=&n\Omega_n\int\limits_0^1r^{2m+n-1}\left[ \int\limits_S u(\zeta)Z^p_m(x,\zeta)d\sigma(\zeta)\right]dr\\
&=&n\Omega_n u(x)\int\limits_0^1 r^{2m+n-1}dr=\frac{n\Omega_n}{n+2m}u(x).
\end{eqnarray*}
Hence
$$u(x)=\frac{n+2m}{n\Omega_n }\int\limits\limits_{B}u(y)Z^p_m(x,y)dy.$$ 
By the homogeneity we may write
$$u(x)=\frac{n+2m}{pn\Omega_n }\sum_{k=0}^{p-1}\int\limits_{B}u(e^{\frac{k\pi i}{p}}y)Z^p_m(x,e^{\frac{k\pi i}{p}}y)dy.$$
Now, let $u$ be a polynomial of degree $M$, then $u$ is the sum of the homogeneous polynomials, so by Proposition \ref{S6} and the last equation we have the desired formula.
\end{proof}
\begin{Stw}
\label{S8}
The space of polyharmonic polynomials is a dense subset of  $b_p^2(\widehat{B}_p)$. 
\end{Stw}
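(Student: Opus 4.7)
The strategy is the standard two-stage approximation: first approximate $u\in b_p^2(\widehat{B}_p)$ by the dilation $u_r(x):=u(rx)$ with $r<1$ close to $1$, and then approximate $u_r$ by a partial sum of the homogeneous expansion furnished by Proposition \ref{S2}, which is already a polyharmonic polynomial of order $p$. Fix $u\in b_p^2(\widehat{B}_p)$ and $\varepsilon>0$; the goal is to produce a polyharmonic polynomial $P$ with $\|u-P\|_{b_p^2}<\varepsilon$.

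For the dilation step, set $v_k(y):=u(e^\frac{k\pi i}{p}y)$ for $k=0,\ldots,p-1$, so each $v_k\in L^2(B)$. Since $u_r(e^\frac{k\pi i}{p}y)=v_k(ry)$, one has
\[
\|u-u_r\|_{b_p^2}^2=\frac{1}{p}\sum_{k=0}^{p-1}\int_B\bigl|v_k(y)-v_k(ry)\bigr|_{\CC}^2\,dy,
\]
and the right-hand side tends to $0$ as $r\to 1^-$ by the $L^2$-continuity of dilations: approximate each $v_k$ in $L^2(B)$ by a continuous $g$ with compact support in $B$, use uniform continuity of $g$ to get $g(r\,\cdot)\to g$ uniformly on $B$, and absorb the tail via $\|v_k(r\,\cdot)-g(r\,\cdot)\|_{L^2(B)}\le r^{-n/2}\|v_k-g\|_{L^2(B)}$ (change of variables $z=ry$). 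Choose $r\in(0,1)$ with $\|u-u_r\|_{b_p^2}<\varepsilon/2$.

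For the truncation step, Proposition \ref{S2} gives $q_m\in\mathcal{H}_m^p(\CC^n)$ with $u(x)=\sum_{m=0}^{\infty}q_m(x)$ on $\widehat{B}_p$, converging absolutely and uniformly on compact subsets. By the homogeneity of $q_m$,
\[
u_r(x)=\sum_{m=0}^{\infty}r^m q_m(x)\quad\text{for }x\in\tfrac{1}{r}\widehat{B}_p.
\]
Because $r<1$ implies $r\,\overline{\widehat{B}_p}\subset\widehat{B}_p$, the set $\overline{\widehat{B}_p}$ is a compact subset of $\tfrac{1}{r}\widehat{B}_p$, so the series converges uniformly on $\overline{\widehat{B}_p}$. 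The partial sum $P_M:=\sum_{m=0}^{M}r^m q_m$ is a polyharmonic polynomial of order $p$, and
\[
\|u_r-P_M\|_{b_p^2}^2\le\Omega_n\cdot\sup_{z\in\overline{\widehat{B}_p}}\bigl|u_r(z)-P_M(z)\bigr|_{\CC}^2\longrightarrow 0
\]
as $M\to\infty$. Choose $M$ with $\|u_r-P_M\|_{b_p^2}<\varepsilon/2$; the triangle inequality then gives $\|u-P_M\|_{b_p^2}<\varepsilon$.

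The only genuine analytic ingredient is the $L^2$-continuity of dilations on $B$; the truncation step is purely a matter of compact containment, made possible by the strict inclusion $r\overline{\widehat{B}_p}\subset\widehat{B}_p$ for $r<1$. I do not expect a serious obstacle.
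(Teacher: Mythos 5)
Your argument is correct, but it is not the route the paper takes. The paper's proof of Proposition \ref{S8} simply defers to the harmonic case ({\cite[Lemma 8.8]{A-B-R}}), whose mechanism is orthogonality rather than dilation: one expands $u=\sum_m q_m$ via Proposition \ref{S2}, integrates $|u|^2$ over $r\widehat{B}_p$ term by term using the locally uniform convergence, kills the cross terms by the orthogonality of distinct degrees (the computation of Proposition \ref{S6} restricted to $rB$), and lets $r\to 1$ by monotone convergence to get the Parseval identity $\|u\|_{b_p^2}^2=\sum_m\|q_m\|_{b_p^2}^2$. This shows the orthogonal series $\sum q_m$ converges in $b_p^2(\widehat{B}_p)$, and its limit is $u$ because norm convergence implies locally uniform convergence (Proposition \ref{S3}); the partial sums are then the approximating polynomials. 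Your dilation-plus-truncation scheme replaces all of this with the $L^2$-continuity of dilations, which is a purely measure-theoretic fact requiring no orthogonality at all, and your verification of the two steps (the change of variables bound $r^{-n/2}\|v_k-g\|_{L^2(B)}$, the compact containment $r\overline{\widehat{B}_p}\subset\widehat{B}_p$, and the crude estimate $\|u_r-P_M\|_{b_p^2}^2\le\Omega_n\sup_{\overline{\widehat{B}_p}}|u_r-P_M|_{\CC}^2$) is sound. The trade-off is that the paper's approach yields as a by-product the Parseval identity and the $b_p^2$-convergence of the homogeneous expansion of $u$ itself, which is the structure actually exploited in computing the kernel in Theorem \ref{T1}, whereas your approach is more elementary and self-contained but approximates by the dilated polynomials $\sum_{m\le M}r^mq_m$ rather than by the partial sums of $u$'s own expansion.
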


\begin{proof}
The proof is the same as in the harmonic case (see {\cite[Lemma 8.8]{A-B-R}}). Here we use Proposition \ref{S2}.
\end{proof}

\begin{Le}[{\cite[Theorem 4]{G-M-2}}]
\label{L8}
Let $\zeta\in \widehat{S}_p$. Then there exists a constant $C>0$ such that 
$$|Z^p_m(x,\zeta)|_{\CC}\leq C pm^{n-2} ||x||^m$$
for every $x\in\widehat{B}_p$.
\end{Le}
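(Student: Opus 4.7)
The strategy is to reduce the bound to the classical estimate for ordinary zonal harmonics via the decomposition (\ref{e3}), and then verify that the apparently complex-valued factors $|x|^{2k}$ and $|\overline{\zeta}|^{2k}$ contribute only their (real) moduli to the final bound.

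As a first step, I would recall the classical bound
\[
|Z_m(y,\eta)|_{\CC} \le c\, m^{n-2}\,|y|^m \quad \text{for } y \in B,\ \eta \in S,
\]
which follows from writing $Z_m(\cdot,\eta)$ in an orthonormal basis of $\mathcal{H}_m(S)$ combined with the dimension estimate $\dim \mathcal{H}_m(S) = O(m^{n-2})$ (see \cite[Ch.~5]{A-B-R}). Next I would extend this estimate to $\widehat{B}_p \times \widehat{S}_p$: writing $x = e^{j\pi i/p}y$ and $\zeta = e^{l\pi i/p}\eta$ with $y \in B$ and $\eta \in S$, the rotation convention for zonal harmonics from Section 3 gives $Z_m(x,\zeta) = e^{m(j-l)\pi i/p}Z_m(y,\eta)$, and since $\|x\| = |y|$ I obtain
\[
|Z_m(x,\zeta)|_{\CC} \le c\, m^{n-2}\,\|x\|^m \quad \text{for } x \in \widehat{B}_p,\ \zeta \in \widehat{S}_p.
\]

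Second, I would substitute this into (\ref{e3}). A direct computation from $x = e^{j\pi i/p}y$ yields $|x|^{2k} = e^{2jk\pi i/p}|y|^{2k}$, hence $\bigl||x|^{2k}\bigr|_{\CC} = |y|^{2k} = \|x\|^{2k}$; similarly, for $\zeta = e^{l\pi i/p}\eta$ with $\eta \in S$ one has $|\overline{\zeta}|^{2k} = e^{-2lk\pi i/p}$, so $\bigl||\overline{\zeta}|^{2k}\bigr|_{\CC} = 1$. Invoking the convention from Lemma \ref{L4} that $Z_{m-2k}\equiv 0$ when $m < 2k$, I conclude
\[
|Z^p_m(x,\zeta)|_{\CC} \le \sum_{k=0}^{p-1} \|x\|^{2k}\cdot c\,(m-2k)^{n-2}\|x\|^{m-2k} \le c\,p\,m^{n-2}\,\|x\|^m,
\]
using the elementary bound $(m-2k)^{n-2} \le m^{n-2}$ (for $n \ge 2$), which is exactly the desired estimate.

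The only real technical point is checking that the complex moduli $\bigl||x|^{2k}\bigr|_{\CC}$ and $\bigl||\overline{\zeta}|^{2k}\bigr|_{\CC}$ collapse to $\|x\|^{2k}$ and $1$ on the rotated sets; once this bookkeeping is dispatched, the inequality reduces to a routine combination of the decomposition (\ref{e3}) with the classical zonal-harmonic bound, and the factor $p$ appears simply because there are $p$ terms in the sum.
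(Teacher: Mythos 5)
The paper does not actually prove this lemma: it is imported wholesale as \cite[Theorem 4]{G-M-2}, so there is no internal argument to compare yours against. Taken on its own terms, your reconstruction is correct and is the natural one: the decomposition (\ref{e3}) reduces everything to the classical estimate $|Z_{m}(y,\eta)|\le \dim\mathcal{H}_{m}(\RR^{n})\,|y|^{m}=O(m^{n-2})|y|^{m}$ from \cite[Ch.~5]{A-B-R}, the rotation relation $Z_m(e^{j\pi i/p}\zeta,e^{l\pi i/p}\eta)=e^{m(j-l)\pi i/p}Z_m(\zeta,\eta)$ shows the extension to $\widehat{B}_p\times\widehat{S}_p$ costs nothing in modulus, and your bookkeeping $\bigl||x|^{2k}\bigr|_{\CC}=\|x\|^{2k}$, $\bigl||\overline{\zeta}|^{2k}\bigr|_{\CC}=1$ is exactly the point that needs checking on the rotated sets (it is the reason the homogeneity degree recombines to $\|x\|^{2k}\cdot\|x\|^{m-2k}=\|x\|^{m}$). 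The vanishing convention $Z_{m-2k}\equiv 0$ for $m<2k$ and the crude bound $(m-2k)^{n-2}\le m^{n-2}$ (valid since $n\ge 2$) then give the factor $p\,m^{n-2}$. Two cosmetic caveats: the inequality as literally stated degenerates at $m=0$ (where $m^{n-2}=0$ but $Z^p_0\equiv 1$), so one should read $m^{n-2}$ as $(1+m)^{n-2}$ or restrict to $m\ge 1$, exactly as one must already do for the cited classical bound; and your first display should carry the dimension constant uniformly in $m$, which it does since $\dim\mathcal{H}_m(\RR^n)\le c\,(1+m)^{n-2}$ with $c=c(n)$. Neither affects the substance.
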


\begin{Tw}
\label{T1}
The polyharmonic Bergman kernel is given by
$$R_p(x,y)=\frac{1}{n\Omega_n}\sum_{m=0}^{\infty}(n+2m)Z^p_m(x,y),$$
where the series converges absolutely and uniformly on $K\times \widehat{B}_p$ for every compact  subset $K\subset \widehat{B}_p$.
\end{Tw}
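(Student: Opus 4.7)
The plan is to introduce the candidate kernel
$$F(x,y):=\frac{1}{n\Omega_n}\sum_{m=0}^{\infty}(n+2m)Z^p_m(x,y),$$
to establish its convergence and membership in $b_p^2(\widehat{B}_p)$ in the second slot, to verify the reproducing identity on the dense subspace of polyharmonic polynomials, and finally to extend by density. Uniqueness of the Bergman kernel (already supplied by the Riesz argument preceding the theorem) then forces $F=R_p$.

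For convergence I would combine Lemma \ref{L8} with the degree-$m$ homogeneity of $Z^p_m$ in its second argument. For $y\in\widehat{B}_p\setminus\{0\}$, writing $y=\|y\|\eta$ with $\eta=y/\|y\|\in\widehat{S}_p$ gives $|Z^p_m(x,y)|_\CC=\|y\|^m|Z^p_m(x,\eta)|_\CC\leq Cpm^{n-2}\|x\|^m\|y\|^m$. On any compact $K\subset\widehat{B}_p$ there exists $r_0<1$ with $\|x\|\leq r_0$ for $x\in K$, so the $m$-th term of the series is dominated by a constant multiple of $(n+2m)m^{n-2}r_0^m$, which is summable. Hence the series defining $F$ converges absolutely and uniformly on $K\times\widehat{B}_p$. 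The same bound makes $F(x,\cdot)$ uniformly bounded on $\widehat{B}_p$ (for each fixed $x$), hence square-integrable; and since each partial sum is a polyharmonic polynomial in $y$ and the convergence is uniform on compact subsets of $\widehat{B}_p$, Proposition \ref{S1} makes $F(x,\cdot)$ polyharmonic of order $p$ on $\widehat{B}_p$. Thus $F(x,\cdot)\in b_p^2(\widehat{B}_p)$.

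Next I would verify the reproducing property on polyharmonic polynomials. A polyharmonic polynomial $u$ of degree $M$ decomposes as $u=\sum_{m=0}^{M}q_m$ with $q_m\in\mathcal{H}^p_m(\CC^n)$ (the polynomial case of Proposition \ref{S2}). Proposition \ref{S7}, applied to each $q_m$, gives
$$q_m(x)=\frac{n+2m}{pn\Omega_n}\sum_{k=0}^{p-1}\int_B q_m(e^{k\pi i/p}y)Z^p_m(x,e^{k\pi i/p}y)\,dy,$$
and using the Hermitian symmetry of the zonal polyharmonic together with the homogeneity $q_m(e^{k\pi i/p}y)=e^{mk\pi i/p}q_m(y)$ and the rotational behaviour of $Z^p_m$ under $e^{k\pi i/p}$, this integral can be rewritten as the conjugated Bergman inner product $\tfrac{n+2m}{n\Omega_n}\langle q_m,Z^p_m(x,\cdot)\rangle_{b_p^2}$. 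Summing over $m\leq M$ and invoking the orthogonality of Proposition \ref{S6} to kill cross-degree terms yields $u(x)=\langle u,F(x,\cdot)\rangle_{b_p^2}$; the interchange with the tail of $F$ is justified by the uniform convergence established above.

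Finally I would extend to arbitrary $u\in b_p^2(\widehat{B}_p)$ by density. Proposition \ref{S8} furnishes a sequence of polyharmonic polynomials $u_k\to u$ in $b_p^2$-norm; Proposition \ref{S3} then gives $u_k(x)\to u(x)$, while the Cauchy--Schwarz inequality gives $\langle u_k,F(x,\cdot)\rangle_{b_p^2}\to\langle u,F(x,\cdot)\rangle_{b_p^2}$. Passing to the limit in $u_k(x)=\langle u_k,F(x,\cdot)\rangle_{b_p^2}$ gives the reproducing identity for all $u\in b_p^2(\widehat{B}_p)$, and uniqueness of the Bergman kernel yields $F=R_p$. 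The principal technical obstacle I anticipate is the passage from Proposition \ref{S7}, whose integrand is unconjugated, to the conjugated Bergman pairing against $F(x,\cdot)$: one has to track carefully how the rotations $e^{k\pi i/p}$ act on $u\in\mathcal{H}^p_m(\CC^n)$ and on $Z^p_m$ in each of its two slots (holomorphic in the first, antiholomorphic in the second) and exploit the Hermitian symmetry of $Z^p_m$ to absorb the complex conjugate. Once this matching is carried out cleanly, the remainder of the argument is essentially formal.
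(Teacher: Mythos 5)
Your proposal is correct and follows essentially the same route as the paper: convergence of the series via Lemma \ref{L8} and homogeneity, the reproducing identity on polynomials via Propositions \ref{S6} and \ref{S7}, extension by density via Propositions \ref{S8} and \ref{S3}, and uniqueness from the Riesz representation. The paper's own proof is just a terser version of this (it dispatches everything except the convergence estimate with the single sentence ``By (\ref{e10}) and Proposition \ref{S7} and Proposition \ref{S8}, we need only to show the convergence of the series''), and the conjugation bookkeeping you flag at the end is a real subtlety that the paper silently glosses over rather than something you are missing.
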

\begin{proof}
By (\ref{e10}) and Proposition \ref{S7} and Proposition \ref{S8},  we need only to show the convergence of the series.
By Lemma \ref{L8}  there is 
$$|Z^p_m(x,y)|_{\CC}=||\overline{y}|^m|_{\CC}|Z^p_m(x,y/|y|)|_{\CC}\leq Cpm^{n-2}||x||^m ||\overline{y}||^m\leq Cpm^{n-2}||x||^m,$$
thus
$$\max_{(x,y)\in K\times \widehat{B}_p}\sum_{m=0}^{\infty}(n+2m)|Z^p_m(x,y)|_{\CC}\leq Cp\max_{(x,y)\in K\times \widehat{B}_p}
\sum_{m=0}^{\infty}(n+2m)m^{n-2}||x||^m,
$$
but 
$$\frac{(n+2(m+1))(m+1)^{n-2}}{(n+2m)m^{n-2}} \rightarrow 1 \ \  \textrm{as} \ \ m\rightarrow \infty,$$
what completes the proof.
\end{proof}

\begin{Tw}
\label{T2}
The polyharmonic Bergman kernel is given by:
\begin{eqnarray*}
R_p(x,y) &=& \frac{1}{n\Omega_n}\left( nP_p(x,y)+\frac{d}{dt}P_p(tx,ty)\biggr|_{t=1}\right)\\
&=&\frac{(n-4p)|x|^{2p+2}|\overline{y}|^{2p+2}+(8px\overline{y}-n-4p)|x|^{2p}|\overline{y}|^{2p}+n(1-|x|^{2}|\overline{y}|^{2})}{n\Omega_n(1-2x\overline{y}+|x|^2|\overline{y}|^2)^{n/2+1}}.
\end{eqnarray*}
\end{Tw}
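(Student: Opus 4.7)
The plan is to derive the first equality from Theorem \ref{T1} via homogeneity, and then substitute the closed form (\ref{e8}) for $P_p$ and collapse the resulting expression algebraically.

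For the first equality, I would note that each $Z^p_m(x,y)$ is the restriction of a polynomial in $\mathcal{H}^p_m(\CC^n)$, hence is homogeneous of degree $m$ in each argument separately, giving $Z^p_m(tx,ty)=t^{2m}Z^p_m(x,y)$ for $t\in\RR$. The expansion $P_p(x,y)=\sum_{m=0}^\infty Z^p_m(x,y)$ from Lemma \ref{L5}, combined with the bound from Lemma \ref{L8} (which, after factoring out the radial variable, yields $|Z^p_m(x,y)|_\CC\leq Cpm^{n-2}||x||^m||y||^m$), ensures that for fixed $(x,y)\in\widehat{B}_p\times\widehat{B}_p$ the series $\sum_{m=0}^\infty t^{2m}Z^p_m(x,y)$ converges absolutely together with its termwise $t$-derivative on a neighborhood of $t=1$. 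Differentiating term by term and setting $t=1$ gives $\frac{d}{dt}P_p(tx,ty)\big|_{t=1}=\sum_{m=0}^\infty 2mZ^p_m(x,y)$, and adding $nP_p(x,y)$ produces $\sum_{m=0}^\infty(n+2m)Z^p_m(x,y)=n\Omega_nR_p(x,y)$ by Theorem \ref{T1}. This establishes the first equality.

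For the second (explicit) equality, writing $A=x\overline{y}$, $B=|x|^2|\overline{y}|^2$, $C=|x|^{2p}|\overline{y}|^{2p}$ and using $|tx|=t|x|$ and $|\overline{ty}|=t|\overline{y}|$, (\ref{e8}) gives $P_p(tx,ty)=(1-t^{4p}C)/(1-2t^2A+t^4B)^{n/2}$. A direct computation and evaluation at $t=1$ yield
\begin{equation*}
\frac{d}{dt}P_p(tx,ty)\Big|_{t=1}=-\frac{4pC}{(1-2A+B)^{n/2}}+\frac{2n(1-C)(A-B)}{(1-2A+B)^{n/2+1}}.
\end{equation*}
Adding $nP_p=n(1-C)(1-2A+B)^{-n/2}$ and bringing everything over the common denominator $(1-2A+B)^{n/2+1}$, the $n(1-C)$ contributions telescope via $n(1-C)[(1-2A+B)+2(A-B)]=n(1-C)(1-B)$, leaving the numerator $n(1-C)(1-B)-4pC(1-2A+B)$. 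Expanding and regrouping by powers of $|x||\overline{y}|$ and of $x\overline{y}$ reproduces exactly $(n-4p)|x|^{2p+2}|\overline{y}|^{2p+2}+(8px\overline{y}-n-4p)|x|^{2p}|\overline{y}|^{2p}+n(1-|x|^2|\overline{y}|^2)$; dividing by $n\Omega_n$ finishes the proof.

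The only real subtlety is the termwise differentiation in the first step; once that is in place, the clean telescoping in the last step essentially forces the answer, and the remainder is straightforward bookkeeping.
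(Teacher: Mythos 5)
Your proposal is correct and follows essentially the same route as the paper: the first equality via the homogeneity relation $Z^p_m(tx,ty)=t^{2m}Z^p_m(x,y)$ applied termwise to the expansion of $P_p$, and the second by differentiating the closed form (\ref{e8}) and combining over the common denominator (your telescoping $n(1-C)\left[(1-2A+B)+2(A-B)\right]=n(1-C)(1-B)$ reproduces the paper's computation exactly). Your explicit justification of the termwise differentiation via Lemma \ref{L8} is a small refinement the paper leaves implicit, but it is not a different method.
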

\begin{proof}
The proof is almost the same as in the harmonic case. By homogeneity of the zonal polyharmonics we have
$$2mZ^p_m(x,y)=\frac{d}{dt}t^{2m} Z^p_m(x,y)\biggr|_{t=1}=\frac{d}{dt}Z^p_m(tx,ty)\biggr|_{t=1}.$$
For the second equality by (\ref{e8}) we compute
\begin{multline*}
\frac{d}{dt}P_p(tx,ty)\biggr|_{t=1}=\frac{d}{dt}\frac{1-t^{4p}|x|^{2p}|\overline{y}|^{2p}}{(1-2t^2x\overline{y}+t^4|x|^2|\overline{y}|^2)^{n/2}}\biggr|_{t=1}\\
=\frac{(-4pt^{4p-1}|x|^{2p}|\overline{y}|^{2p})(t^4|x|^{2}|\overline{y}|^{2}-2t^2x\overline{y}+1)^{n/2}}{(1-2t^2x\overline{y}+t^4|x|^2|\overline{y}|^2)^{n}}\biggr|_{t=1}\\
-\frac{\frac{1}{2}n(1-t^{4p}|x|^{2p}|\overline{y}|^{2p})(-4tx\overline{y}+4t^3|x|^{2}|\overline{y}|^{2})(1-2t^2x\overline{y}+t^4|x|^2|\overline{y}|^2)^{n/2-1}}{(1-2t^2x\overline{y}+t^4|x|^2|\overline{y}|^2)^{n}}\biggr|_{t=1}\\
=\frac{2n(1-|x|^{2p}|\overline{y}|^{2p})(x\overline{y}-|x|^{2}|\overline{y}|^{2})-4p|x|^{2p}|\overline{y}|^{2p}(|x|^{2}|\overline{y}|^{2}-2x\overline{y}+1)}{(1-2x\overline{y}+|x|^2|\overline{y}|^2)^{n/2+1}}
\end{multline*}
and
$$nP_p(x,y)=\frac{n-n|x|^{2p}|\overline{y}|^{2p}-2nx\overline{y}+2nx\overline{y}|x|^{2p}|\overline{y}|^{2p}+n|x|^{2}|\overline{y}|^{2}-n|x|^{2p+2}|\overline{y}|^{2p+2}}{(1-2x\overline{y}+|x|^2|\overline{y}|^2)^{n/2+1}}.$$
Adding the last equalities we get the desired formula.
\end{proof}
In the next theorem we express the polyharmonic Bergman kernel in the terms of the harmonic Bergman kernel and harmonic Poisson kernel.
\begin{Tw}
The polyharmonic Bergman kernel is given by
\label{T3}
\begin{equation}
R_p(x,y)=\frac{1-|x|^{2p}|\overline{y}|^{2p}}{1-|x|^2|\overline{y}|^{2}} R(x,y)+\frac{1}{n\Omega_n}\sum_{k=0}^{p-1}4k|x|^{2k}|\overline{y}|^{2k}P(x,y).
\end{equation}
\end{Tw}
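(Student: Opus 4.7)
The natural plan is to start from the series representation in Theorem \ref{T1}, use the decomposition of the zonal polyharmonic $Z^p_m$ into ordinary zonal harmonics from \eqref{e3}, and then recognise the resulting inner series as combinations of the harmonic Bergman kernel and the harmonic Poisson kernel.

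Concretely, substituting \eqref{e3} into the formula of Theorem \ref{T1} and exchanging the order of summation (which is legal on compact subsets of $\widehat{B}_p$ by the absolute and uniform convergence established in Theorem \ref{T1} together with Lemma \ref{L5}), I would write
\begin{equation*}
R_p(x,y)=\frac{1}{n\Omega_n}\sum_{k=0}^{p-1}|x|^{2k}|\overline{y}|^{2k}\sum_{m=2k}^{\infty}(n+2m)Z_{m-2k}(x,y),
\end{equation*}
using the convention $Z_{m-2k}\equiv 0$ for $m<2k$. Re-indexing by $j=m-2k$ (so that $n+2m=n+2j+4k$) and splitting the constant part off gives
\begin{equation*}
R_p(x,y)=\frac{1}{n\Omega_n}\sum_{k=0}^{p-1}|x|^{2k}|\overline{y}|^{2k}\left[\sum_{j=0}^{\infty}(n+2j)Z_j(x,y)+4k\sum_{j=0}^{\infty}Z_j(x,y)\right].
\end{equation*}

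At this point the two inner series are precisely the harmonic Bergman kernel and the harmonic Poisson kernel (after multiplication by the appropriate constants): $\sum_{j=0}^{\infty}(n+2j)Z_j(x,y)=n\Omega_n R(x,y)$ and $\sum_{j=0}^{\infty}Z_j(x,y)=P(x,y)$, with both identities extended from $B\times B$ to $\widehat{B}_p\times\widehat{B}_p$ via Lemma \ref{L1}, as already noted after Theorem \ref{T1} and at the end of Section 3. Finally, summing the geometric progression
\begin{equation*}
\sum_{k=0}^{p-1}|x|^{2k}|\overline{y}|^{2k}=\frac{1-|x|^{2p}|\overline{y}|^{2p}}{1-|x|^2|\overline{y}|^2}
\end{equation*}
yields the claimed identity.

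I do not anticipate a serious obstacle here; the only delicate point is justifying the interchange of summations, but this is handled by the uniform convergence on $K\times\widehat{B}_p$ provided by Theorem \ref{T1} combined with Lemma \ref{L8}, which ensures that the double series has an absolutely convergent majorant on compacta. Everything else is bookkeeping: a change of summation index and recognition of two classical kernels.
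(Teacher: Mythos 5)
Your proposal is correct and follows essentially the same route as the paper's own proof: substitute the decomposition \eqref{e3} into the series of Theorem \ref{T1}, re-index the inner sum, split off the $4k$ term, identify the two inner series with $n\Omega_n R(x,y)$ and $P(x,y)$, and sum the geometric progression in $k$. The only cosmetic difference is that you make the interchange-of-summation justification explicit, which the paper leaves implicit.
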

\begin{proof}
By Theorem \ref{T1} and (\ref{e3}) we have
\begin{eqnarray*}
R_p(x,y)&=&\frac{1}{n\Omega_n}\sum_{m=0}^{\infty}(n+2m)Z^p_m(x,y)\\
&=&\frac{1}{n\Omega_n}\sum_{k=0}^{p-1}|x|^{2k}|\overline{y}|^{2k}\sum_{m=-2k}^{\infty}(n+2m+4k)Z_{m}(x,y)\\
&=&\frac{1}{n\Omega_n}\sum_{k=0}^{p-1}|x|^{2k}|\overline{y}|^{2k}\sum_{m=0}^{\infty}(n+2m+4k)Z_{m}(x,y)\\
&=&\frac{1}{n\Omega_n}\sum_{k=0}^{p-1}|x|^{2k}|\overline{y}|^{2k}\sum_{m=0}^{\infty}(n+2m)Z_{m}(x,y)\\
&&+\frac{1}{n\Omega_n}\sum_{k=0}^{p-1}4k|x|^{2k}|\overline{y}|^{2k}\sum_{m=0}^{\infty} Z_{m}(x,y).
\end{eqnarray*}
Using  (\ref{e8}) and Theorem \ref{T1} with $p=1$ we obtain
\begin{eqnarray*}
R_p(x,y)&=&\sum_{k=0}^{p-1}|x|^{2k}|\overline{y}|^{2k}R(x,y)+\frac{1}{n\Omega_n}\sum_{k=0}^{p-1}4k|x|^{2k}|\overline{y}|^{2k}\sum_{m=0}^{\infty} Z_{m}(x,y)\\
&=&\frac{1-|x|^{2p}|\overline{y}|^{2p}}{1-|x|^2|\overline{y}|^{2}} R(x,y)+\frac{1}{n\Omega_n}\sum_{k=0}^{p-1}4k|x|^{2k}|\overline{y}|^{2k}P(x,y).
\end{eqnarray*}
\end{proof}

\section{The weighted Bergman kernel}
Let $\alpha+n>0$ and $\beta>-1$. Let us consider the set of polyharmonic functions of order $p$ on the ball $B$ such that
\begin{equation}
\label{eq:weighted}
||u||_{b_{p,\alpha,\beta}^2}:=\left(\frac{1}{p}\sum_{k=0}^{p-1}\int\limits_B\left|u(e^\frac{k\pi i}{p}y) \right|_\CC^2|y|^\alpha(1-|y|^{2})^\beta dy \right)^{1/2}
<\infty.
\end{equation}
This space is called a polyharmonic weighted Bergman space with weights $\alpha,\beta$ and we denote it by $b_{p,\alpha,\beta}^2(\widehat{B}_p)$. Hence
$$b_{p,\alpha,\beta}^2(\widehat{B}_p):=\mathcal{A}_{\Delta^p}(B)\cap L^2(\widehat{B}_p,|y|^\alpha(1-|y|^{2})^\beta dy),$$
where $L^2(\widehat{B}_p,|y|^\alpha(1-|y|^{2})^\beta dy)$ is the space of measurable functions on $\widehat{B}_p$ which satisfy (\ref{eq:weighted}).

By mean value property for polyharmonic functions (Lemma \ref{L7}) we conclude that
\begin{Le}[{\cite[Introduction]{T}}]
\label{L9}
Let $n+\alpha>0,\beta>-1$, then for every compact subset $K\subset B$ and $x\in K$, there exists a constant $C=C(K,n,p)$ such that
$$|u(x)|_\CC^2\leq C  \int\limits_B\left|u(y) \right|_\CC^2|y|^\alpha(1-|y|^{2})^\beta dy$$
for every $u\in \mathcal{A}_{\Delta^p}(B)\cap L^2(B,|y|^\alpha(1-|y|^{2})^\beta dy).$
\end{Le}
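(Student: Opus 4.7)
The goal is to strengthen Lemma \ref{L7} by absorbing the radial weight $w(y):=|y|^{\alpha}(1-|y|^{2})^{\beta}$ into the right-hand side. The obstacle is that $w$ can degenerate on the singular locus $\{0\}\cup\partial B$: it vanishes at the origin when $\alpha>0$ and at the boundary when $\beta>0$, so one cannot simply divide the right-hand side of Lemma \ref{L7} by a global lower bound for $w$. The plan is to combine Lemma \ref{L7} with a case analysis on the location of $x\in K$.

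Fix $\rho\in(0,1)$ with $K\subset\overline{B(0,\rho)}$ and set $\delta:=(1-\rho)/4$. For $x\in K$ with $|x|>\delta$ I would choose a small radius $r\in(0,\delta/2)$ so that $\overline{B(x,r)}$ lies in the compact annulus $A:=\{y:\delta/2\le|y|\le(1+\rho)/2\}$, on which $w$ is bounded below by some positive constant $c_{1}=c_{1}(\rho,n,\alpha,\beta)$. Applying Lemma \ref{L7} to the rescaled polyharmonic function $u_{r}(\xi):=u(x+r\xi)$ with the singleton compact set $\{0\}$ gives
\begin{equation*}
|u(x)|_{\CC}^{2}=|u_{r}(0)|_{\CC}^{2}\le C(n,p)\,\|u_{r}\|_{L^{2}(B)}^{2}=C(n,p)\,r^{-n}\int_{B(x,r)}|u|_{\CC}^{2}\,dz\le\frac{C(n,p)}{c_{1}r^{n}}\int_{B}|u|_{\CC}^{2}\,w\,dz.
\end{equation*}

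For $x\in K$ with $|x|\le\delta$ the small-ball argument fails, so I would use a reproducing formula based on the polyharmonic Pizzetti expansion. Since $\Delta^{p}u\equiv0$, the spherical mean $M_{r}(u,x):=\int_{S}u(x+r\zeta)\,d\sigma(\zeta)$ is a polynomial in $r^{2}$ of degree at most $p-1$ with constant term $u(x)$; explicitly $M_{r}(u,x)=\sum_{j=0}^{p-1}c_{j}\,r^{2j}\,\Delta^{j}u(x)$ with $c_{0}=1$. By elementary linear algebra one can choose a polynomial $\phi\colon[2\delta,3\delta]\to\RR$ satisfying $\int_{2\delta}^{3\delta}\phi(r)\,r^{2j}\,dr=\delta_{j,0}$ for $j=0,\ldots,p-1$, which yields the reproducing identity $u(x)=\int_{2\delta}^{3\delta}\phi(r)M_{r}(u,x)\,dr$. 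For $|x|\le\delta$ the integration shell $\{y:2\delta\le|y-x|\le 3\delta\}$ is contained in the annulus $\{y:\delta\le|y|\le 1-\rho\}$, a compact subset of $B$ on which $w$ is bounded below by some positive constant $c_{2}$. Cauchy--Schwarz together with the polar change of variables $y=x+r\zeta$ then yields
\begin{equation*}
|u(x)|_{\CC}^{2}\le\|\phi\|_{L^{2}}^{2}\int_{2\delta}^{3\delta}|M_{r}(u,x)|_{\CC}^{2}\,dr\le\frac{\|\phi\|_{L^{2}}^{2}}{(2\delta)^{n-1}n\Omega_{n}c_{2}}\int_{B}|u|_{\CC}^{2}\,w\,dz.
\end{equation*}

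Combining the two cases yields the lemma with a constant that depends only on $K$, $n$, $p$, $\alpha$, $\beta$. I expect the main technical step to be the reproducing formula in the second case: the Pizzetti truncation for polyharmonic functions is what permits bypassing the fact that every small ball around a point $x$ near the origin contains $0$, and it is essentially the only mechanism by which one can get a weighted pointwise bound on $|u(x)|$ when $\alpha>0$ and $x$ approaches the origin.
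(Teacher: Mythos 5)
Your proof is correct, and it is worth noting that the paper does not actually prove Lemma \ref{L9} at all: it is stated with a citation to the introduction of \cite{T} and the one-line remark that it follows from the mean value property (Lemma \ref{L7}), so there is no internal argument to compare against. Your write-up supplies exactly the details that are being outsourced. The case $|x|>\delta$ is the standard reduction (rescale Lemma \ref{L7} to a small ball on which the weight $w(y)=|y|^{\alpha}(1-|y|^{2})^{\beta}$ is bounded below), and it alone already handles every $x\in K$ when $\alpha\le 0$, since then $|y|^{\alpha}$ is bounded below on any ball $\overline{B(0,\rho')}$ with $\rho'<1$; the genuinely new ingredient is your Pizzetti shell-averaging identity $u(x)=\int_{2\delta}^{3\delta}\phi(r)M_{r}(u,x)\,dr$, which is needed precisely when $\alpha>0$ and $x$ is near the origin, and which correctly exploits that the truncated Pizzetti expansion of a $p$-polyharmonic function lets one reproduce $u(x)$ from data on an annulus avoiding the degeneracy of the weight. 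All the containments check out ($2\delta\le|y-x|\le 3\delta$ and $|x|\le\delta$ force $\delta\le|y|\le 1-\rho$), the Gram-matrix argument for the existence of $\phi$ is sound, and the Cauchy--Schwarz/polar-coordinates step is routine. Two cosmetic remarks: in the first case you should fix $r$ once and for all (say $r=\delta/4$) so that the constant is uniform over $x\in K$; and the constant you produce depends on $\alpha$ and $\beta$ as well as on $K,n,p$ --- this dependence is suppressed in the paper's statement of the lemma, which writes $C=C(K,n,p)$, but it is unavoidable and harmless.
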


\begin{Stw}
\label{S9}
Let $n+\alpha>0,\beta>-1$, then for every compact subset $K\subset B$ and $x\in K$, there exists a constant $C=C(K,n,p)$ such that
$$|u(x)|_\CC\leq C  ||u||_{b_{p,\alpha,\beta}^2}$$
for every $u\in b_{p,\alpha,\beta}^2(\widehat{B}_p).$
\end{Stw}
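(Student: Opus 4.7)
The plan is to mimic the proof of Proposition \ref{S3} verbatim, with the unweighted mean value estimate (Lemma \ref{L7}) replaced by its weighted counterpart (Lemma \ref{L9}). The only conceptual issue is to check that each ``rotated slice'' of $u$ still lies in the weighted $L^2$ space over $B$, so that Lemma \ref{L9} applies.

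First I would fix a compact $K\subset B$ and an $x\in K$. For each $j\in\{0,1,\dots,p-1\}$ Lemma \ref{L1} guarantees that the function
$$u_j(y):=u(e^{j\pi i/p}y),\qquad y\in B,$$
belongs to $\mathcal{A}_{\Delta^p}(B)$. Moreover, the $j$-th summand in the definition of $\|u\|_{b^2_{p,\alpha,\beta}}$ is precisely $\int_B|u_j(y)|_{\CC}^2|y|^\alpha(1-|y|^2)^\beta\,dy$, which is finite by hypothesis, so $u_j\in \mathcal{A}_{\Delta^p}(B)\cap L^2(B,|y|^\alpha(1-|y|^2)^\beta\,dy)$. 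This is exactly the hypothesis needed to invoke Lemma \ref{L9}.

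Next I would apply Lemma \ref{L9} to each $u_j$: there is a constant $C=C(K,n,p)$ (independent of $j$ since the weight is radial and $K$ is fixed) such that
$$|u(e^{j\pi i/p}x)|_{\CC}^2=|u_j(x)|_{\CC}^2\le C\int\limits_B|u_j(y)|_{\CC}^2|y|^\alpha(1-|y|^2)^\beta\,dy.$$
Since the right-hand side is nonnegative, enlarging by the remaining summands yields
$$|u(e^{j\pi i/p}x)|_{\CC}^2\le C\sum_{k=0}^{p-1}\int\limits_B|u(e^{k\pi i/p}y)|_{\CC}^2|y|^\alpha(1-|y|^2)^\beta\,dy=Cp\,\|u\|_{b^2_{p,\alpha,\beta}}^2,$$
and taking square roots gives the asserted bound (with a new constant $\sqrt{Cp}$) uniformly for all $j$. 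For a general compact $K\subset\widehat{B}_p$ one writes each point of $K$ as $e^{j\pi i/p}x'$ with $x'$ in a compact subset of $B$ and applies the previous estimate.

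I do not expect any genuine obstacle: the argument is a direct transcription of the proof of Proposition \ref{S3}, the only novelty being the invocation of Lemma \ref{L9} in place of Lemma \ref{L7}. The closest thing to a subtlety is the rotation-invariance check in the previous paragraph, namely that the weight $|y|^\alpha(1-|y|^2)^\beta$ depends only on $|y|$ and hence is unchanged under the substitution $y\mapsto e^{j\pi i/p}y$, so the same constant $C$ from Lemma \ref{L9} works for every $j$.
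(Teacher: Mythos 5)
Your argument is exactly the one the paper intends: it declares that the proof ``follows from Lemma \ref{L9} and is similar to the proof of Proposition \ref{S3}'', which is precisely your slice-by-slice application of the weighted mean value estimate followed by enlarging to the full sum and taking square roots. The rotation-invariance remark about the radial weight is a correct and worthwhile detail that the paper leaves implicit.
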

\begin{proof}
The proof follows from Lemma \ref{L9} and it is similar to the proof of Proposition \ref{S3}.
\end{proof}

\begin{Wn}
\label{W1}
The space $b_{p,\alpha,\beta}^2(\widehat{B}_p)$ is a closed subspace of the Hilbert space $L^2(\widehat{B}_p, |y|^\alpha(1-|y|^2)^\beta dy)$ with the inner product
\begin{equation}
\label{e12}
\left\langle u,v \right\rangle_{b_{p,\alpha,\beta}^2}=
\frac{1}{p}\sum_{k=0}^{p-1}\int\limits_B 
 u(e^\frac{k\pi i}{p}y) \overline{v(e^\frac{k\pi i}{p}y)} |y|^\alpha(1-|y|^{2})^\beta dy.
\end{equation}
\end{Wn}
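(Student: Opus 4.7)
The plan is to mirror the proof of Proposition \ref{S4}, with Proposition \ref{S9} playing the role of Proposition \ref{S3}. First I would verify that $L^2(\widehat{B}_p,|y|^\alpha(1-|y|^2)^\beta dy)$ endowed with the sesquilinear form (\ref{e12}) is itself a Hilbert space. The assumptions $n+\alpha>0$ and $\beta>-1$ make $|y|^\alpha(1-|y|^2)^\beta\,dy$ a finite Radon measure on $B$, so pulling a function on $\widehat{B}_p$ back to $B$ via the $p$ rotations $r_k\colon y\mapsto e^{k\pi i/p}y$ gives an isometric identification (up to the $1/p$ averaging) with an orthogonal sum of $p$ copies of the ordinary weighted $L^2(B)$-space, and the Hilbert-space axioms transfer.

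To prove closedness, I would take a Cauchy sequence $(u_n)$ in $b_{p,\alpha,\beta}^2(\widehat{B}_p)$; by the previous step $u_n\to u$ in the weighted $L^2$ norm for some $u$. Proposition \ref{S9} bounds $|v(x)|_\CC$ only for $x$ in a compact subset of $B$, but by the averaging trick used in the proof of Proposition \ref{S3} the bound extends to any compact $K\subset \widehat{B}_p$: for $x\in K\cap e^{k\pi i/p}B$, apply Proposition \ref{S9} to the rotated function $v\circ r_k$ (polyharmonic on $B$ by Lemma \ref{L1}) and then add all $p$ terms. The reindexing of $\sum_{j}\int_B |v(e^{(k+j)\pi i/p}y)|_\CC^2 |y|^\alpha(1-|y|^2)^\beta\,dy$ is legal because the weight depends only on $|y|$, and the substitution $y\mapsto -y$ (needed when $k+j\ge p$) preserves both the measure and $B$. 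Applied to $u_n-u_m$, this gives $|u_n(x)-u_m(x)|_\CC\le C\|u_n-u_m\|_{b_{p,\alpha,\beta}^2}$ uniformly on compact subsets of $\widehat{B}_p$.

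Hence $(u_n)$ converges uniformly on compact subsets of $\widehat{B}_p$ to some continuous $\tilde u$, and Proposition \ref{S1} guarantees that $\tilde u$ is polyharmonic of order $p$ on $\widehat{B}_p$. Since local uniform convergence forces $L^2$-convergence on each compact subset, $\tilde u=u$ almost everywhere with respect to the weighted measure, so after choosing the polyharmonic representative we obtain $u\in b_{p,\alpha,\beta}^2(\widehat{B}_p)$, establishing closedness. The only mild obstacle is extending the pointwise estimate from $B$ to all of $\widehat{B}_p$; because the weight is radial and the rotations act nicely, the argument of Proposition \ref{S3} carries over essentially verbatim.
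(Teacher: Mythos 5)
Your argument is correct and follows exactly the route the paper intends: Corollary \ref{W1} is obtained from Proposition \ref{S9} in the same way that Proposition \ref{S4} is obtained from Proposition \ref{S3} (the paper defers to the harmonic case, \cite[Corollary 8.3]{A-B-R}), namely the pointwise bound turns norm-Cauchy sequences into locally uniformly convergent ones, and Proposition \ref{S1} makes the limit polyharmonic. Your extra care in extending the estimate of Proposition \ref{S9} from compact subsets of $B$ to compact subsets of $\widehat{B}_p$ via rotation and the radial weight is exactly the averaging step already used in the proof of Proposition \ref{S3}, so nothing is missing.
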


By the last corollary we conclude that $b_{p,\alpha,\beta}^2(\widehat{B}_p)$ 
is a Hilbert space with the  inner product (\ref{e12}). Again, let $x\in \widehat{B}_p$ be a fixed point and
let the linear functional $\Lambda:b_{p,\alpha,\beta}^2(\widehat{B}_p)\rightarrow \CC$ be such that $\Lambda_x(u)=u(x)$, 
then by Corollary \ref{W1} and Riesz Theorem there exists the function $R_{p,\alpha,\beta}\in b_{p,\alpha,\beta}^2(\widehat{B}_p)$ such that for every $u\in b_{p,\alpha,\beta}^2(\widehat{B}_p)$ we have
\begin{eqnarray*}
u(x)&=&\left\langle u,R_{p,\alpha,\beta}(x,\cdot) \right\rangle_{b_{p,\alpha,\beta}^2}\\
&=& \frac{1}{p}\sum_{k=0}^{p-1}\int\limits_B 
 u(e^\frac{k\pi i}{p}y) \overline{R_{p,\alpha,\beta}(x,e^\frac{k\pi i}{p}y)} |y|^\alpha(1-|y|^{2})^\beta dy.
\end{eqnarray*}
The function $R_{p,\alpha,\beta}(x,\cdot)$ is called \emph{a polyharmonic weighted Bergman kernel}. Let's note that by the Lemma \ref{L1}, the harmonic weighted Bergman kernel $R_{1,\alpha,\beta}(x,y)$ can be extended from $B\times B$ on $\widehat{B}_p\times \widehat{B}_p.$

\begin{Uw}
\label{U1}
It is easy to observe that the analogous  properties given in Proposition \ref{S5}, Proposition \ref{S6} and Proposition \ref{S8} from the previous section hold, one can change $R_p$  to $R_{p,\alpha,\beta}$. 
\end{Uw}

\begin{Stw}
\label{S10}
Let $u$ be a polynomial of degree $M$. Then
\begin{multline*}
u(x)=\sum_{m=0}^{M}\frac{2\Gamma(m+\frac{n+\alpha}{2}+\beta+1)}{pn\Omega_n \Gamma(\beta+1)\Gamma(m+\frac{\alpha+n}{2})}\\
\times \sum_{k=0}^{p-1}\int\limits_{B}u(e^{\frac{k\pi i}{p}}y)Z^p_m(x,e^{\frac{k\pi i}{p}}y) |y|^\alpha(1-|y|^{2})^\beta dy.
\end{multline*}
\end{Stw}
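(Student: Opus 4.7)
I would mimic the proof of Proposition~\ref{S7}, with the unweighted radial integral replaced by a Beta integral and the weighted orthogonality promised by Remark~\ref{U1} used in place of Proposition~\ref{S6}. Suppose first $u\in\mathcal{H}^p_m(\CC^n)$. By the reproducing identity (\ref{e5}) and polar coordinates $y=r\zeta$, the degree-$m$ homogeneity of both $u$ and $Z^p_m(x,\cdot)$ in the second (real) variable pulls $r^{2m}$ out of the sphere integral, leaving
\[
\int_B u(y)\,Z^p_m(x,y)\,|y|^\alpha(1-|y|^2)^\beta\,dy \;=\; n\Omega_n\,u(x)\int_0^1 r^{2m+n+\alpha-1}(1-r^2)^\beta\,dr.
\]
The substitution $s=r^2$ evaluates the radial integral as $\tfrac{\Gamma(m+(n+\alpha)/2)\Gamma(\beta+1)}{2\,\Gamma(m+(n+\alpha)/2+\beta+1)}$, whose reciprocal is precisely the constant in the statement. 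The sum over rotations $k=0,\dots,p-1$ is then introduced by the cancellation $u(e^{k\pi i/p}y)\,Z^p_m(x,e^{k\pi i/p}y)=u(y)\,Z^p_m(x,y)$: the phase $e^{mk\pi i/p}$ from the homogeneity of $u$ meets the phase $e^{-mk\pi i/p}$ coming from (\ref{e3}), so each of the $p$ summands equals the single integral and the factor $1/p$ appears naturally.

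For a general polyharmonic polynomial $u$ of degree $M$, I would decompose $u=\sum_{m=0}^M u_m$ with $u_m\in\mathcal{H}^p_m(\CC^n)$ (valid because the homogeneous components of a polyharmonic polynomial are themselves polyharmonic, as $\Delta^p$ preserves degree), apply the previous step to each $u_m$, and sum. The identity then reduces to checking that the cross terms vanish for $l\neq m$:
\[
\sum_{k=0}^{p-1}\int_B u_l(e^{k\pi i/p}y)\,Z^p_m(x,e^{k\pi i/p}y)\,|y|^\alpha(1-|y|^2)^\beta\,dy \;=\; 0.
\]
Let $\tilde v\in\mathcal{H}^p_m(\CC^n)$ be the unique polynomial with $\tilde v(y)=\overline{Z^p_m(x,y)}$ for $y\in B$ (explicitly, $\tilde v=Z^p_m(\bar x,\cdot)$); a short rotation-phase calculation shows that the above cross term equals $p\,\langle u_l,\tilde v\rangle_{b^2_{p,\alpha,\beta}}$, which is zero by the weighted analog of Proposition~\ref{S6} recorded in Remark~\ref{U1}.

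The main obstacle is this final identification. The extension of $Z^p_m$ to $\widehat{B}_p\times\widehat{B}_p$ defined by (\ref{e3}) is anti-homogeneous in its second argument — the rotation phase is $e^{-mk\pi i/p}$ rather than the standard $e^{mk\pi i/p}$ — so $Z^p_m(x,\cdot)$ is not literally an element of $\mathcal{H}^p_m$; only after passing to the conjugate $\tilde v$ does the phase flip to $e^{mk\pi i/p}$, which matches the homogeneity in $\mathcal{H}^p_m$ and makes the weighted Bergman orthogonality of distinct $\mathcal{H}^p_\bullet$-spaces available to kill the cross terms. Everything else is a routine Beta-integral computation parallel to the argument for Proposition~\ref{S7}.
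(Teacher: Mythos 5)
Your proposal is correct and follows essentially the same route as the paper: reduce to $u\in\mathcal{H}^p_m(\CC^n)$, evaluate the weighted radial integral as a Beta function to produce the constant $\frac{2\Gamma(m+\frac{n+\alpha}{2}+\beta+1)}{\Gamma(\beta+1)\Gamma(m+\frac{n+\alpha}{2})}$, introduce the rotation sum by homogeneity, and dispose of the cross terms for a general polynomial via the weighted analogue of Proposition~\ref{S6} from Remark~\ref{U1}. You merely make explicit the phase bookkeeping (the cancellation $e^{mk\pi i/p}e^{-mk\pi i/p}=1$ and the conjugation needed to place $Z^p_m(x,\cdot)$ in $\mathcal{H}^p_m$) that the paper leaves implicit under the words ``by homogeneity.''
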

\begin{proof}
The proof is similar to the proof of Proposition \ref{S7}.  First we assume that $u\in\mathcal{H}_m^p(\CC^n)$. Then
\begin{multline*}
\int\limits_{B}u(y)Z^p_m(x,y)|y|^\alpha(1-|y|^{2})^\beta dy\\
=n\Omega_n\int\limits_{0}^1r^{n+2m+\alpha-1}(1-r^2)^\beta\int\limits_S u(\zeta)Z^p_m(x,\zeta)d\zeta dr\\
=n\Omega_n u(x)\int\limits_0^1 r^{n+2m+\alpha-1}(1-r^2)^\beta dr\\
=n\Omega_n \frac{\Gamma(\beta+1)\Gamma(m+\frac{n+\alpha}{2})}{2\Gamma(m+\frac{n+\alpha}{2}+\beta+1)} u(x).
\end{multline*}
Hence
$$u(x)=\frac{2\Gamma(m+\frac{n+\alpha}{2}+\beta+1)}{n\Omega_n \Gamma(\beta+1)\Gamma(m+\frac{n+\alpha}{2})}\int\limits_{B}u(y)Z^p_m(x,y)|y|^\alpha(1-|y|^{2})^\beta dy.$$
By homogeneity (see Remark \ref{U1}) we get 
\begin{multline*}
u(x)=\frac{2\Gamma(m+\frac{n+\alpha}{2}+\beta+1)}{pn\Omega_n \Gamma(\beta+1)\Gamma(m+\frac{n+\alpha}{2})}\\
 \times \sum_{k=0}^{p-1}\int\limits_{B}u(e^{\frac{k\pi i}{p}}y)Z^p_m(x,e^{\frac{k\pi i}{p}}y) |y|^\alpha(1-|y|^{2})^\beta dy.
\end{multline*}
Now let $u$ be a polynomial of degree $M$, then by last equation and  Remark \ref{U1} we obtain the desired formula.
\end{proof}
\begin{Tw}
\label{T4}
The polyharmonic Bergman kernel is given by
$$R_{p,\alpha,\beta}(x,y)=\frac{1}{n\Omega_n}\sum_{m=0}^{\infty}\frac{2\Gamma(m+\frac{n+\alpha}{2}+\beta+1)}{ \Gamma(\beta+1)\Gamma(m+\frac{n+\alpha}{2})}  Z^p_m(x,y),$$
where the series converges absolutly and uniformly on $K\times \widehat{B}_p$ for every compact  subset $K\subset \widehat{B}_p$.
\end{Tw}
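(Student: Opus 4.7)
The plan is to mirror the strategy used for Theorem \ref{T1}, with only the coefficients replaced in accordance with Proposition \ref{S10}. Write tentatively
$$\widetilde{R}_{p,\alpha,\beta}(x,y):=\frac{1}{n\Omega_n}\sum_{m=0}^{\infty}\frac{2\Gamma(m+\frac{n+\alpha}{2}+\beta+1)}{\Gamma(\beta+1)\Gamma(m+\frac{n+\alpha}{2})}Z^p_m(x,y).$$
I first establish that this series converges absolutely and uniformly on $K\times\widehat{B}_p$ for every compact $K\subset\widehat{B}_p$; once this is done, $\widetilde{R}_{p,\alpha,\beta}(x,\cdot)$ is a continuous (in particular polyharmonic, by Proposition \ref{S1}) function belonging to $b^2_{p,\alpha,\beta}(\widehat{B}_p)$, and the reproducing property follows by passing Proposition \ref{S10} through a density argument, exactly as in the unweighted case.

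For the convergence step, I use Lemma \ref{L8} together with the factorisation $|Z^p_m(x,y)|_\CC=||\overline{y}|^m|_\CC\,|Z^p_m(x,y/|y|)|_\CC\leq Cpm^{n-2}\|x\|^m\|\overline{y}\|^m\leq Cpm^{n-2}\|x\|^m$, and I need to control the Gamma-quotient. Stirling's formula gives the asymptotic
$$\frac{\Gamma(m+\frac{n+\alpha}{2}+\beta+1)}{\Gamma(m+\frac{n+\alpha}{2})}\sim m^{\beta+1}\quad\text{as}\quad m\to\infty,$$
so the general term of the series is dominated by a constant multiple of $m^{n-1+\beta}\|x\|^m$. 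On any compact $K\subset\widehat{B}_p$ one has $\sup_{x\in K}\|x\|<1$, hence the ratio test (or direct comparison with a convergent geometric-type series) gives absolute and uniform convergence on $K\times\widehat{B}_p$, just as in the proof of Theorem \ref{T1}.

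Having established the convergence, I apply Proposition \ref{S10} to any polyharmonic polynomial $u$ of degree $M$; the formula there says exactly that
$$u(x)=\frac{1}{p}\sum_{k=0}^{p-1}\int_B u(e^\frac{k\pi i}{p}y)\,\overline{\widetilde{R}_{p,\alpha,\beta}(x,e^\frac{k\pi i}{p}y)}\,|y|^\alpha(1-|y|^2)^\beta\,dy,$$
because the Gamma-coefficients are real and the orthogonality from (the weighted analog of) Proposition \ref{S6} in Remark \ref{U1} kills the tail $m>M$ of the kernel when tested against $u$. By the weighted version of Proposition \ref{S8} (also covered by Remark \ref{U1}), polyharmonic polynomials are dense in $b^2_{p,\alpha,\beta}(\widehat{B}_p)$, and the pointwise bound in Proposition \ref{S9} lets me pass to the limit for an arbitrary $u\in b^2_{p,\alpha,\beta}(\widehat{B}_p)$. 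Thus $\widetilde{R}_{p,\alpha,\beta}$ satisfies the reproducing identity, and uniqueness of the reproducing kernel (from the Riesz representation theorem) forces $\widetilde{R}_{p,\alpha,\beta}=R_{p,\alpha,\beta}$.

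The main obstacle is the convergence estimate: one must be careful that the ratio $\Gamma(m+\frac{n+\alpha}{2}+\beta+1)/\Gamma(m+\frac{n+\alpha}{2})$ only grows polynomially in $m$ (of order $m^{\beta+1}$), so that the factor $\|x\|^m<1$ still dominates. Beyond that point, every other step is a routine transcription of the unweighted argument via Remark \ref{U1}.
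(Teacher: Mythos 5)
Your proposal is correct and follows essentially the same route as the paper: reduce to Proposition \ref{S10} plus the weighted analogues of the density and orthogonality statements via Remark \ref{U1}, and then verify convergence of the series using the bound from Lemma \ref{L8} together with the fact that the Gamma-quotient grows only polynomially in $m$. The only cosmetic difference is that you control the coefficients by the Stirling asymptotic $\Gamma(m+\tfrac{n+\alpha}{2}+\beta+1)/\Gamma(m+\tfrac{n+\alpha}{2})\sim m^{\beta+1}$, whereas the paper applies the ratio test by computing that the quotient of consecutive coefficients tends to $1$; both yield the same conclusion.
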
 
\begin{proof}
By Remark \ref{U1} and Proposition \ref{S10}, we need to show the convergence. As in the proof of Theorem \ref{T1} we have
\begin{multline*}
\max_{(x,y)\in K\times \widehat{B}_p}\sum_{m=0}^{\infty}\frac{2\Gamma(m+\frac{n+\alpha}{2}+\beta+1)}{ \Gamma(\beta+1)\Gamma(m+\frac{n+\alpha}{2})}|Z^p_m(x,y)|_{\CC}\\
\leq Cp\max_{(x,y)\in K\times \widehat{B}_p}
\sum_{m=0}^{\infty}\frac{2\Gamma(m+\frac{n+\alpha}{2}+\beta+1)}{ \Gamma(\beta+1)\Gamma(m+\frac{n+\alpha}{2})}m^{n-2}||x||^m.
\end{multline*}
Moreover
\begin{multline*}
\frac{2\Gamma(m+\frac{n+\alpha}{2}+\beta +2)}{ \Gamma(\beta+1)\Gamma(m+\frac{n+\alpha}{2}+1)}\cdot \frac{\Gamma(\beta+1)\Gamma(m+\frac{n+\alpha}{2})}{2\Gamma(m+\frac{n+\alpha}{2}+\beta+1)}\\
=\frac{n+2m+\alpha+2\beta+2}{n+2m+\alpha}  \rightarrow 1 \ \ \textrm{as} \ \ m\rightarrow \infty
\end{multline*}
and this completes the proof.
\end{proof} 
We may also give the counterpart of Theorem \ref{T2} using the fractional derivatives in the Riemann-Liouville sense (see for example \cite{P3}). Let's recall the definitions. 

Let $l>0$, then the primitive of $u\in L^1(0,1)$ is as follows
$$D^{-l}u(t)=\frac{1}{\Gamma(l)}\int\limits_0^1 \frac{u(\tau)}{(t-\tau)^{1-l}}d\tau.$$
The derivative of order $l$ is as follows
$$D^lu(t)=\frac{d^j}{dt^j}\left(D^{-(j-l)}u(t) \right),$$
where $j$ is an integer number such that $j-1\leq l \leq j$. As in harmonic case (see  \cite{P3}), using the identity
$$D^{l+1}t^k=\frac{\Gamma(k+1)}{\Gamma(k-l)}t^{k-l-1}$$
and again the formula (\ref{e8}), we obtain the following theorem for polyharmonic case:
\begin{Tw}
\label{T5}
The polyharmonic weighted Bergman kernel is given by
$$R_{p,\alpha,\beta}(x,y)=\frac{2}{n\Gamma(\beta+1)\Omega_n}D^{\beta+1}\left(t^{\frac{n+\alpha}{2}+\beta }P_p(tx,y)\right)\biggr|_{t=1}.$$
\end{Tw}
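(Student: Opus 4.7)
The plan is to mimic the approach of the classical harmonic weighted case: insert the Poisson series into the right hand side, apply the fractional derivative termwise via the identity $D^{l+1}t^{k}=\Gamma(k+1)/\Gamma(k-l)\,t^{k-l-1}$ recalled just before the theorem, and match the result with the series given in Theorem \ref{T4}.

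First I would use the homogeneity of $Z_m^p$ in the first variable, which gives $Z_m^p(tx,y)=t^m Z_m^p(x,y)$, together with formula (\ref{e8}), to write
\begin{equation*}
t^{\frac{n+\alpha}{2}+\beta}P_p(tx,y)=\sum_{m=0}^{\infty}t^{m+\frac{n+\alpha}{2}+\beta}Z^p_m(x,y).
\end{equation*}
Then, applying $D^{\beta+1}$ termwise with $l=\beta$ and $k=m+\frac{n+\alpha}{2}+\beta$, the recalled identity yields
\begin{equation*}
D^{\beta+1}t^{m+\frac{n+\alpha}{2}+\beta}=\frac{\Gamma\!\left(m+\frac{n+\alpha}{2}+\beta+1\right)}{\Gamma\!\left(m+\frac{n+\alpha}{2}\right)}\,t^{m+\frac{n+\alpha}{2}-1},
\end{equation*}
and evaluating at $t=1$ produces exactly the coefficients appearing in Theorem \ref{T4}. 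Multiplying by the prefactor $2/(n\Gamma(\beta+1)\Omega_n)$ reproduces the series representation of $R_{p,\alpha,\beta}(x,y)$.

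The main obstacle is the interchange of $D^{\beta+1}$ with the infinite sum, since $D^{\beta+1}$ is defined as an ordinary derivative of order $j=\lceil\beta+1\rceil$ applied to the Riemann--Liouville integral $D^{-(j-\beta-1)}$. To justify this, I would fix $x$ in a compact subset $K\subset\widehat{B}_p$, so that for $t$ in a neighborhood of $1$ the point $tx$ still lies in a slightly larger compact subset of $\widehat{B}_p$. On such a set the bound from Lemma \ref{L8}, namely $|Z_m^p(tx,y)|_\CC\le Cpm^{n-2}\|tx\|^m$ with $\|tx\|<1$, gives geometric decay uniformly in $t$, which by dominated convergence lets one pass the integral in $D^{-(j-\beta-1)}$ inside the sum. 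The remaining $j$ ordinary $t$-derivatives commute with the sum by the same uniform-convergence estimate (Weierstrass M-test on $|t|\le 1$), completing the identification with the series in Theorem \ref{T4}.
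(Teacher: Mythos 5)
Your proposal is correct and follows essentially the same route the paper indicates: expand $P_p(tx,y)$ via (\ref{e8}) and the homogeneity of $Z_m^p$, apply the identity $D^{l+1}t^k=\Gamma(k+1)/\Gamma(k-l)\,t^{k-l-1}$ termwise with $l=\beta$, $k=m+\frac{n+\alpha}{2}+\beta$, and match the resulting coefficients with Theorem \ref{T4}. The paper in fact leaves the proof at the level of this one-line sketch, so your additional justification of the termwise interchange via the Lemma \ref{L8} bound is a welcome strengthening rather than a deviation.
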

\begin{Tw}
\label{T6}
The polyharmonic weighted Bergman kernel is given by
$$R_{p,\alpha,\beta}(x,y)=\sum_{k=0}^{p-1}|x|^{2k}|\overline{y}|^{2k}R_{1,\alpha+4k,\beta}(x,y),$$
in particular
$$R_{p}(x,y)=\sum_{k=0}^{p-1}|x|^{2k}|\overline{y}|^{2k}R_{1,4k,0}(x,y).$$
\end{Tw}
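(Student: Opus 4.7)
The plan is to mimic the derivation of Theorem \ref{T3}, replacing the pair (Bergman kernel $R$, Poisson kernel $P$) by the single weighted kernel $R_{1,\alpha+4k,\beta}$. The starting point is the series representation from Theorem \ref{T4}:
\begin{equation*}
R_{p,\alpha,\beta}(x,y)=\frac{1}{n\Omega_n}\sum_{m=0}^{\infty}\frac{2\Gamma(m+\frac{n+\alpha}{2}+\beta+1)}{ \Gamma(\beta+1)\Gamma(m+\frac{n+\alpha}{2})}Z^p_m(x,y),
\end{equation*}
into which I will substitute the expansion (\ref{e3}), namely $Z^p_m(x,y)=\sum_{k=0}^{p-1}|x|^{2k}|\overline{y}|^{2k}Z_{m-2k}(x,y)$, with the convention that $Z_{m-2k}\equiv 0$ whenever $m<2k$.

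Next I would swap the order of summation between $m$ and $k$. This interchange is legitimate on compact subsets of $\widehat{B}_p\times\widehat{B}_p$ because Theorem \ref{T4} already gives absolute and uniform convergence of the outer series on $K\times \widehat{B}_p$, and for each fixed $k$ the factor $|x|^{2k}|\overline{y}|^{2k}$ is bounded on $\widehat{B}_p\times\widehat{B}_p$. After swapping, I reindex the inner sum by $m\mapsto m+2k$, which is harmless thanks to the vanishing convention for $Z_{m-2k}$ at $m<2k$. This yields
\begin{equation*}
R_{p,\alpha,\beta}(x,y)=\sum_{k=0}^{p-1}|x|^{2k}|\overline{y}|^{2k}\cdot\frac{1}{n\Omega_n}\sum_{m=0}^{\infty}\frac{2\Gamma(m+\frac{n+\alpha}{2}+2k+\beta+1)}{\Gamma(\beta+1)\Gamma(m+\frac{n+\alpha}{2}+2k)}Z_{m}(x,y).
\end{equation*}

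The key algebraic observation is that $\frac{n+\alpha}{2}+2k=\frac{n+(\alpha+4k)}{2}$, so the inner series is precisely the one obtained from Theorem \ref{T4} (with $p=1$ and the weight $\alpha$ replaced by $\alpha+4k$); it therefore equals $R_{1,\alpha+4k,\beta}(x,y)$. This establishes the general formula. For the particular case one just sets $\alpha=\beta=0$, noting that then $R_{p,0,0}=R_p$ because the weight $|y|^0(1-|y|^2)^0\equiv 1$ reduces (\ref{e12}) to (\ref{e9}), and this yields $R_p(x,y)=\sum_{k=0}^{p-1}|x|^{2k}|\overline{y}|^{2k}R_{1,4k,0}(x,y)$.

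There is no real obstacle: the proof is essentially an index manipulation plus recognition of a series, and the hardest (but still routine) point is to justify the double-series rearrangement and the reindexing $m\mapsto m+2k$. Both are harmless since the series from Theorem \ref{T4} converges absolutely and uniformly on compacta, and the ``missing'' terms $Z_{m-2k}$ with $m<2k$ are identically zero by Lemma \ref{L4}.
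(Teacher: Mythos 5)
Your proposal is correct and follows essentially the same route as the paper: substitute the decomposition (\ref{e3}) into the series of Theorem \ref{T4}, interchange the sums, reindex $m\mapsto m+2k$, and recognize the inner series as $R_{1,\alpha+4k,\beta}$ via the identity $\tfrac{n+\alpha}{2}+2k=\tfrac{n+\alpha+4k}{2}$. Your added justification of the rearrangement and the explicit treatment of the special case $\alpha=\beta=0$ are fine refinements of what the paper leaves implicit.
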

\begin{proof}
By Theorem \ref{T4} we have
$$R_{p,\alpha,\beta}(x,y)=\frac{1}{n\Omega_n}\sum_{m=0}^{\infty}\frac{2\Gamma(m+\frac{n+\alpha}{2}+\beta+1)}{ \Gamma(\beta+1)\Gamma(m+\frac{n+\alpha}{2})}  Z^p_m(x,y).$$
From (\ref{e3}) we get
\begin{eqnarray*}
R_{p,\alpha,\beta}(x,y)&=&\frac{1}{n\Omega_n}\sum_{k=0}^{p-1}\sum_{m=0}^{\infty}\frac{2\Gamma(m+\frac{n+\alpha}{2}+\beta+1)}{ \Gamma(\beta+1)\Gamma(m+\frac{n+\alpha}{2})}  |x|^{2k}|\overline{y}|^{2k}Z_{m-2k}(x,y)\\
&=&\frac{1}{n\Omega_n}\sum_{k=0}^{p-1}\sum_{m=0}^{\infty}\frac{2\Gamma(m+2k+\frac{n+\alpha}{2}+\beta+1)}{ \Gamma(\beta+1)\Gamma(m+2k+\frac{n+\alpha}{2})} |x|^{2k}|\overline{y}|^{2k} Z_{m}(x,y)\\
&=&\frac{1}{n\Omega_n}\sum_{k=0}^{p-1}\sum_{m=0}^{\infty}\frac{2\Gamma(m+\frac{n+\alpha+4k}{2}+\beta+1)}{ \Gamma(\beta+1)\Gamma(m+\frac{n+\alpha+4k}{2})} |x|^{2k}|\overline{y}|^{2k} Z_{m}(x,y).
\end{eqnarray*}
Using Theorem \ref{T4} we obtain the desired formula.
\end{proof}

\section*{Acknowledgements}
The author is grateful to S{\l}awomir Michalik for many valuable comments.

\end{document}